\newtheorem{lemma}{Lemma}[section]
\newtheorem{otheorem}[lemma]{Theorem}
\newtheorem{theorem}{Theorem}
\newtheorem{proposition}[lemma]{Proposition}
\theoremstyle{definition}
\newtheorem{Def}[lemma]{Definition}
\newtheorem{question}[lemma]{Question}
\newcommand{\pl}{\mathcal{PML}}
\newcommand{\ml}{\mathcal{ML}}
\newcommand{\Int}{\operatorname{Int}}
\let\blackboard\mathbb
\newcommand{\complexes}{\blackboard{C}}
\newcommand{\hyperbolic}{\blackboard{H}}
\newcommand{\reals}{\blackboard{R}}
\newcommand{\naturals}{\blackboard{N}}
\newcommand{\rationals}{\blackboard{Q}}
\newcommand{\mcg}{\operatorname{MCG}}
\newcommand{\length}{\operatorname{length}}
\newcommand{\psl}{\mathrm{PSL}_2 \complexes}
\newcommand{\cc}{\mathcal{CC}}
\newcommand{\ie}{i.e.\ }
\newcommand{\aut}{\operatorname{Aut}}
\newcommand{\maptripod}{\tau}
\newcommand{\Fr}{\operatorname{Fr}}
\begin{document}
\title[Heegaard splittings]{Subgroups of mapping class groups related to Heegaard splittings and bridge decompositions}
\author{Ken'ichi Ohshika}
\address{Department of Mathematics\\
Graduate School of Science\\
Osaka University\\
Toyonaka, Osaka, 560-0043 , Japan}
\email{ohshika@math.sci.osaka-u.ac.jp}

\author{Makoto Sakuma}
\address{Department of Mathematics\\
Graduate School of Science\\
Hiroshima University\\
Higashi-Hiroshima, 739-8526, Japan}
\email{sakuma@math.sci.hiroshima-u.ac.jp}

\subjclass[2010]{Primary 57M50, 57M07, 30F40, 20F34\\
\indent {The first author was supported 
by JSPS Grants-in-Aid 22654008.
The second author was supported
by JSPS Grants-in-Aid 21654011.}}

\begin{abstract}
Let $M=H_1\cup_S H_2$ be a Heegaard 
splitting
of a closed orientable $3$-manifold $M$ (or a bridge decomposition of a link exterior).
Consider the subgroup $\mcg^0(H_j)$ of 
the mapping class group of $H_j$ consisting of 
mapping classes represented by auto-homeomorphisms of $H_j$ homotopic to the identity, and let
$G_j$ be the subgroup of the automorphism group of the curve complex
$\cc(S)$ obtained as the image of $\mcg^0(H_j)$.
Then the group $G=\langle G_1, G_2\rangle$ generated by 
$G_1$ and $G_2$ preserve the homotopy class in $M$ of 
simple loops on $S$.
In this paper, 
we study the structure of the group $G$ and
the problem to what extent the converse to this observation holds. 
\end{abstract}
\date{}
\maketitle
Let $M$ be a closed orientable $3$-manifold
and $S$ a Heegaard surface of $M$.
Then $M$ is decomposed into two handlebodies $H_1$ and $H_2$
such that $S=\partial H_1=\partial H_2$.
We consider   the (extended) mapping class group of $S$,
i.e., the group of isotopy classes of (possibly orientation-reversing)
auto-homeomorphisms of $S$, and denote it by $\mcg(S)$. 
Let $\mcg(H_j)$ denote the mapping class group of $H_j$ ($j=1,2$).
Then $\mcg(H_j)$ 
can be identified with a subgroup of $\mcg(S)$,
by restricting an auto-homeomorphism of $H_j$ to $S$.
We consider the subgroup of $\mcg(H_j)$ consisting of mapping classes represented by auto-homeomorphisms of $H_j$ homotopic to the identity, and denote it by
 $\mcg^0(H_j)$.

Now, let $\cc(S)$
be the curve complex of $S$,
namely the simplicial complex 
each of whose vertex represents an isotopy class of 
essential simple closed curves in $S$
and each of whose simplex represents a set of isotopy classes with pairwise disjoint representatives. 
Then 
it is known
that
the natural action of $\mcg(S)$ on $\cc(S)$ induces a surjection
from $\mcg(S)$ onto the simplicial automorphism group $\aut(\cc(S))$
whose kernel is trivial or the cyclic group of order $2$ 
generated by hyper-elliptic involution
depending on whether the genus of $S$ is greater than or equal to $2$
(see \cite[Section 8]{Ivanov}, \cite{Kor} and \cite{Luo}).
In this way, 
$\mcg(S)$ (or its quotient by the order $2$ cyclic group)
is canonically identified with $\aut(\cc(S))$.

Let $G_j$ be the image of the subgroup  $\mcg^0(H_j)$ of $\mcg(S)$
in $\aut(\cc(S))$, and let $G=\langle G_1,G_2\rangle$
be the subgroup of $\aut(\cc(S))$ generated by $G_1$ and $G_2$.
Then $G$ preserves the homotopy classes  in $M$ of the simple 
closed curves on $S$,
namely, for any $g\in G$ and for any vertex $\alpha$ of $\cc(S)$,
$g\alpha$ is homotopic to $\alpha$ in $M$.
(We 
ignore the distinction between 
a vertex of $\cc(S)$ and a simple 
closed curve
in $S$ representing the vertex.)

Let $\Delta_j$ be the subset of the vertex set of $\cc(S)$
consisting of the meridians of $H_j$,
namely the set of vertices in $\cc(S)$
represented by simple closed curves 
which bound discs in $H_j$.
Let $Z$ be the set of vertices in $\cc(S)$
represented by simple closed curves 
which are null-homotopic in $M$.
Then, by the above observation, the orbit $G(\Delta_1\cup\Delta_2)$
is contained in $Z$.
The following natural question was posed
by Minsky in \cite[Question ~5.4]{Gordon}.

\begin{question}
\label{Minsky's question}
When is $Z$ equal to the orbit $G(\Delta_1\cup\Delta_2)$?
\end{question}

The same question makes sense not only for Heegaard surfaces
but also for bridge spheres as follows.
Let $K$ be a knot or a link in $S^3$,
and let $S$ be a bridge sphere of $K$. 
Then $(S^3,K)$ is a union of two trivial tangles $(B^3_1,t_1)$
and $(B^3_2,t_2)$ such that
$(S,S\cap K)=\partial (B^3_1,t_1)=\partial (B^3_2,t_2)$.
Here a trivial tangle means
 a pair of a $3$-ball $B^3$ and 
mutually disjoint arcs properly embedded in $B^3$ 
which are simultaneously parallel to mutually disjoint arcs in $\partial B^3$.
We denote the punctured sphere $S-K$ by the same symbol $S$,
and consider the (extended) mapping class group $\mcg(S)$ of 
the punctured sphere $S$. 
Then the mapping class group 
$\mcg(B^3_j,t_j)$ 
of the pair $(B^3_j,t_j)$
can be
 identified with a subgroup of $\mcg(S)$,
by restricting an auto-homeomorphism
of $(B^3_j,t_j)$ to $S$.
Consider the subgroup of 
$\mcg(B^3_j,t_j)$ 
consisting of mapping classes represented by homeomorphisms pairwise-homotopic to the identity,
 and denote it by 
 $\mcg^0(B^3_j,t_j)$.
Let $G_j$ be the image of the subgroup  
 $\mcg^0(B^3_j,t_j)$.
of $\mcg(S)$
in $\aut(\cc(S))$, and let $G=\langle G_1,G_2\rangle$
be the subgroup of $\aut(\cc(S))$ generated by $G_1$ and $G_2$.
Let $\Delta_j$ be the set of vertices in $\cc(S)$
represented by simple closed curves 
which bound discs in $B^3_j-t_j$, and
let $Z$ be the set of vertices in $\cc(S)$
represented by simple closed curves 
which are null-homotopic in 
the link complement $M:=S^3-K$.
Then we can see that
the orbit $G(\Delta_1\cup\Delta_2)$ is contained in $Z$.
This observation was a starting point of \cite{Ohtsuki-Riley-Sakuma},
which gave rise to
a systematic construction of
epimorphisms between 2-bridge link groups.
Again, it is natural to ask when
$Z$ is equal to $G(\Delta_1\cup\Delta_2)$ 
(cf. \cite[Question 9.1]{Ohtsuki-Riley-Sakuma}).

In the second author's joint work with Donghi Lee \cite{lee_sakuma_1},
a complete answer to the above question for $2$-bridge links was given.
Moreover, the following results were obtained in 
a series of joint work 
\cite{lee_sakuma_0,lee_sakuma_1,lee_sakuma_2},
and they were applied in \cite{lee_sakuma_3} to give a variation of McShane's identity for $2$-bridge links.

\begin{theorem}
\label{thm_lee-sakuma}
Let $K$ be a $2$-bridge link in $S^3$ which is neither the trivial knot
nor the $2$-component trivial link,
and let $S$ be a $2$-bridge sphere of $K$.
Let $G=\langle G_1,G_2\rangle$, $\Delta_j$, and $Z$ be as explained above, and
let $\Lambda(G)$ and $\Omega(G)=\pl(S)-\Lambda(G)$, respectively, 
be the limit set and the domain of discontinuity of the action $G$ on $\pl(S)$.
Then the following hold.
\begin{enumerate}[\indent \rm (1)]
\item
The set $Z$ is equal to the orbit $G(\Delta_1\cup\Delta_2)$.
\item
The closure of $Z=G(\Delta_1\cup\Delta_2)$ in $\pl(S)$
is equal to the limit set $\Lambda(G)$.
Moreover $\bar Z=\Lambda(G)$ has measure $0$ in $\pl(S)$.
\item
The domain of discontinuity $\Omega(G)$ has full measure in $\pl(S)$,
and no essential simple closed curve in $S$
representing a point in $\Omega(G)$ is null-homotopic in $M=S^3-K$.
\item
Suppose that $K$ is neither 
a torus link
nor a twist knot.
Then no essential simple closed curve in $S$
representing a point in $\Omega(G)$ is peripheral in $M$,
i.e., no such simple closed curve 
is homotopic to a closed curve 
in a peripheral torus $\partial N(K)$ in $M$.
\item
Suppose that $K$ is neither 
a torus link
nor the Whitehead link.
Then, for any two essential simple 
closed curves in $S$
representing distinct points in $\Omega(G)$,
they are homotopic in $M$
if and only if they lie in the same $G$-orbit.
\item
The group $G$ is isomorphic to the free product $G_1*G_2$.
\end{enumerate}
\end{theorem}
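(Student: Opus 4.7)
Since $S - K$ is a four-punctured sphere, $\pl(S)$ is canonically a circle, the vertices of $\cc(S)$ are the rational points of the Farey tessellation, and $\aut(\cc(S))$ acts through the isometry group of that tessellation. The first step is an explicit description of $\Delta_j$ and $G_j$ in this model. A direct calculation in the trivial tangle shows that $\Delta_j$ is a single vertex $v_j \in \cc(S)$, namely the meridian of $(B^3_j, t_j)$, and that $\mcg^0(B^3_j, t_j)$ surjects onto an infinite-cyclic (possibly extended by a finite factor) subgroup $G_j \subset \aut(\cc(S))$ fixing $v_j$ and acting on $\pl(S) \setminus \{v_j\} \cong \reals$ by a nontrivial translation. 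Nontriviality of $K$ forces $v_1 \neq v_2$.

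With this in hand, parts (6), (2), and (3) reduce to a Schottky-type ping-pong on the circle $\pl(S)$. One selects disjoint open arcs $I_1, I_2 \subset \pl(S)$, each centered at the corresponding meridian vertex, such that every non-identity element of $G_j$ carries $\pl(S) \setminus I_j$ into $I_j$; this is available once one verifies that $v_1, v_2$ are adequately separated in the Farey graph, a property imposed by the 2-bridge structure. The ping-pong lemma then yields $G = G_1 * G_2$, proving (6), and the same partition identifies $\Lambda(G)$ with the closure of $G(\{v_1, v_2\})$, which granting (1) equals $\overline{Z}$. Standard measure-theoretic estimates for two-generator free parabolic groups on a circle show that the resulting limit set is a Cantor set of Lebesgue measure zero, giving (2) and the measure assertion of (3). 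The remaining part of (3) is then immediate from (1).

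The main obstacle is part (1): given a simple closed curve $\alpha$ on $S$ null-homotopic in $M = S^3 - K$, one must produce $g \in G$ with $g\alpha \in \{v_1, v_2\}$. The strategy is to convert $\alpha$ into a cyclic word $w_\alpha$ in a free generating set of $\pi_1(S)$ by reading off its intersections with a canonical system of arcs dual to $v_1$, and then to project $w_\alpha$ into the 2-bridge link group $\pi_1(M)$, which admits a one-relator presentation whose relator is a specific word in two meridian generators determined by the continued fraction of $K$. The technical core, developed over \cite{lee_sakuma_0, lee_sakuma_1, lee_sakuma_2}, is a small-cancellation analysis of this relator: the output is that a cyclically reduced $w_\alpha$ that is trivial in $\pi_1(M)$ and that comes from an embedded loop on the four-punctured sphere either already represents a meridian $v_j$, or else admits a length-reducing simplification realized by an element of $G_1$ or $G_2$. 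Iteration produces the desired $g$. The principal difficulty is maintaining embeddedness of the loop throughout the reduction, since the small-cancellation algorithm reduces word length but does not \emph{a priori} preserve simplicity; a key step matches the elementary simplifications of the algorithm with the $G_j$-action on simple loops.

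Parts (4) and (5) are refinements of this classification. In each case one enumerates, via the same small-cancellation machinery applied to pairs of loops, the possible coincidences among free homotopy classes in $M$ of simple loops representing distinct points of $\Omega(G)$. Such coincidences correspond to short exceptional relators in the 2-bridge presentation, and these arise precisely for torus links, twist knots, and the Whitehead link. Outside the stated exceptions, the classification forces that distinct $G$-orbits in $\Omega(G)$ give distinct homotopy classes in $M$ and that no $\Omega$-class is peripheral in $M$, proving (5) and (4) respectively.
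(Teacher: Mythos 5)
This theorem is not proved in the paper at all: it is imported verbatim from the Lee--Sakuma series \cite{lee_sakuma_0,lee_sakuma_1,lee_sakuma_2} (with the easy inclusion $G(\Delta_1\cup\Delta_2)\subseteq Z$ going back to \cite{Ohtsuki-Riley-Sakuma}), so there is no in-paper argument to compare against. Measured against the actual content of those references, your outline gets the framework right: for a $2$-bridge sphere $\pl(S)$ is the circle $\reals\cup\{\infty\}$ with the Farey tessellation, each $\Delta_j$ is a single slope ($\infty$ and $r$ for $K=K(r)$), the groups $G_j$ are the (dihedral-type) parabolic subgroups fixing those slopes, and (2), (3), (6) do follow from a ping-pong/fundamental-domain argument for $\langle G_1,G_2\rangle$ acting on the circle together with the standard measure-zero statement for limit sets of such groups. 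Minor caveats there: the ping-pong intervals for low-complexity $r$ (e.g.\ torus links) have touching closures, and for the trivial knot $v_1\neq v_2$ still holds (they are Farey neighbours), so "nontriviality forces $v_1\neq v_2$" is not quite the right dichotomy; the exclusion of the trivial knot is needed because there is no room between the two slopes, not because the slopes coincide.

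The genuine gap is part (1), and consequently (4) and (5). Your argument for the hard inclusion $Z\subseteq G(\Delta_1\cup\Delta_2)$ rests on the asserted dichotomy that a cyclically reduced word $w_\alpha$ coming from a simple loop and trivial in $\pi_1(M)$ "either already represents a meridian or admits a length-reducing simplification realized by an element of $G_1$ or $G_2$," with the reduction iterated while preserving simplicity. That dichotomy \emph{is} the main theorem of \cite{lee_sakuma_1}; asserting it as the "output" of an unspecified small-cancellation analysis is circular as a proof, and the obstruction you yourself flag (the elementary simplifications of a Dehn-type algorithm need not preserve embeddedness, and need not be realized by elements of $G_1$ or $G_2$) is exactly the point that is not resolved. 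The actual Lee--Sakuma argument is structured differently: one writes down the explicit word $u_s$ in the upper meridian-pair presentation for each slope $s$ \emph{outside} the orbit $\hat\Gamma_r\{\infty,r\}$, decomposes the one-relator presentation so that it satisfies the small cancellation conditions $C(4)$--$T(4)$, and proves directly by a curvature/weight analysis of reduced van Kampen diagrams that $u_s\neq 1$; there is no iterated reduction toward the meridian. Parts (4) and (5) likewise require the full diagrammatic analysis of \cite{lee_sakuma_2} to enumerate the exceptional coincidences for torus links, twist knots and the Whitehead link; "such coincidences correspond to short exceptional relators" names the phenomenon but supplies no argument. As written, the proposal is a correct road map for (2), (3), (6) and a statement of intent, rather than a proof, for (1), (4), (5).
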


It is natural to ask if the above theorem holds in 
a more general setting (see \cite{Sakuma}).
The purpose of this paper is to give 
the following partial answers to this natural question.
\begin{enumerate}
\item
If $S$ is a Heegaard surface or a bridge sphere with
sufficiently high Hempel distance, then the subgroup $G=\langle G_1,G_2\rangle$ of $\aut(\cc(S))$ 
 is isomorphic to the free product $G_1*G_2$
 (Theorem \ref{free product}).
 \item
If $S$ is a Heegaard surface, with $R$-bounded combinatorics for some $R>0$,
of a closed orientable hyperbolic $3$-manifold $M$ 
and if the Hempel distance of $S$ is larger than a constant $K_0$,
depending only on the topological type of $S$ and the constant $R$,
 then there is a non-empty open set $O$ in the projective measured lamination space
 $\pl(S)$, such that
 \begin{enumerate}
 \item
no simple closed curve
in $S$ representing a point in $O$ 
is null-homotopic in $M$,
\item
two simple closed curves 
in $S$ 
representing distinct points in $O$
cannot be homotopic in $M$.
 \end{enumerate}
 In particular, the action of $G$ on $\pl(S)$ has a non-empty domain
 of discontinuity (Theorems \ref{main1} and \ref{discontinuous open set}).
 \item
 Suppose that $M_n$ is obtained from two handlebodies by 
 an $n$-time iteration of a generic pseudo-Anosov map 
 $\phi$
 and consider the Heegaard splitting of $M_n$
 consisting of the two handlebodies.
 Then the subset $O$ in Theorem \ref{main1} 
 for the Heegaard surface of $M_n$
 can be made almost cover the entire projective lamination space
 so that the almost every point in the projective lamination space is contained in the open subset $O$ for $M_n$ with sufficiently large $n$ (Theorem \ref{iteration}).
\end{enumerate}

We note that 
the result of Namazi \cite{NaT} implies that
if the Hempel distance of a Heegaard splitting
$M=H_1\cup H_2$ is sufficiently large, then
$G_1 \cap G_2$ is finite.
More generally, the result of 
Johnson \cite{Johnson} implies that $G_1 \cap G_2$ is finite 
if the Hempel distance is greater than $3$.
Thus Theorem \ref{free product} may be regarded as a partial refinement
of these consequences of the results of  \cite{NaT} and \cite{Johnson}.
We also note that 
Theorems \ref{main1} and \ref{iteration}
may be regarded as 
a variant of the asymptotical faithfulness of the homomorphism
$\pi_1(H_i)\to \pi_1(M)$ established by Namazi \cite[Theorem 1.6]{Na}
and Namazi and Souto \cite[Lemma 6.1 and Theorem 6.1]{NS}.

The authors would like to thank Brian Bowditch
for his essential contribution to the proof of Theorem \ref{free product}, without which they should not have been able to complete the work.
They would also like to thank Jeff Brock and Yair Minsky
for stimulating conversation, valuable comments on the first version of this paper, and allowing them to read a draft of their joint work \cite{BMNS}
with Hossein Namazi and Juan Souto on model manifolds,
on which Theorems \ref{main1} and \ref{discontinuous open set} depend.

\section{Structure of the group $\langle G_1, G_2 \rangle$}

In this section, we shall prove the following theorem.

\begin{theorem}[Bowditch-Ohshika-Sakuma]
\label{free product}
There is a constant $K_0$ depending only on the topological type of $S$
with the following property.
For a Heegaard splitting or a bridge decomposition
$M=H_1 \cup_S H_2$ with its Hempel distance greater than $K_0$, 
the group $\langle G_1, G_2 \rangle$ is decomposed into a free product $G_1 * G_2$,
where $G_1$ and $G_2$ are subgroups of $\aut(\cc(S))$ defined in the introduction.
\end{theorem}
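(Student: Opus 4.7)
The approach I would take is a ping-pong argument on the curve complex $\cc(S)$. Two facts are crucial. First, by Laudenbach's theorem (with its analogue for trivial tangles) that properly homotopic essential discs in a handlebody are properly isotopic, every element of $\mcg^0(H_j)$ fixes each meridian of $H_j$ up to isotopy, so $G_j$ acts trivially on the disc set $\Delta_j \subset V(\cc(S))$. Second, $\cc(S)$ is Gromov-hyperbolic (Masur--Minsky) and the disc sets $\Delta_j$ are quasi-convex subsets (Masur--Schleimer); thus the coarse nearest-point projection $\pi_j \colon V(\cc(S)) \to \Delta_j$ is well defined up to bounded error, and is coarsely $G_j$-invariant because $G_j$ fixes $\Delta_j$ pointwise and acts by simplicial isometries on $\cc(S)$.

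The plan is to choose $K_0$ large with respect to the hyperbolicity and quasi-convexity constants and, under the hypothesis $D := d(\Delta_1, \Delta_2) > K_0$, to construct two disjoint subsets $X_1, X_2 \subset V(\cc(S))$ satisfying the ping-pong condition $g(X_{3-j}) \subset X_j$ for every $g \in G_j \setminus \{1\}$. The ping-pong lemma for free products will then immediately yield $\langle G_1, G_2 \rangle = G_1 * G_2$.

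A first observation is that distance neighbourhoods of $\Delta_1$ and $\Delta_2$ cannot serve directly as the ping-pong sets, because $G_j$ preserves the distance function $d(\cdot,\Delta_j)$ and hence cannot push a vertex near $\Delta_{3-j}$ onto a vertex near $\Delta_j$. The sets $X_1, X_2$ must instead encode finer information, most naturally the position of the coarse projection $\pi_j(v)$ in $\Delta_j$ relative to the coarse centre $B_j := \pi_j(\Delta_{3-j})$; a candidate is the set of vertices $v$ for which $\pi_j(v)$ is far from $g B_j$ for every $g$ in a suitable window of the word under consideration.

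The main obstacle, and where Bowditch's contribution should be essential, will be to verify that for every non-trivial $g \in G_1$ the coarse projection $\pi_2(g\Delta_1)$ is displaced in $\Delta_2$ by a definite amount from $B_2 = \pi_2(\Delta_1)$; equivalently, that the simultaneous stabilizer in $\aut(\cc(S))$ of $\Delta_1$, $\Delta_2$ and their coarse mutual projections is trivial once the Hempel distance is large. I expect this rigidity step to combine the bounded geodesic image theorem of Masur--Minsky with the faithfulness (up to the hyperelliptic involution) of the action of $\mcg(S)$ on $\cc(S)$, an element fixing so rigid a configuration being forced to act trivially. Once this dichotomy is in place, the required ping-pong inclusions follow from standard thin-triangle estimates in the $\delta$-hyperbolic complex $\cc(S)$, provided $D$ is taken much larger than the quasi-convexity constant and $\delta$.
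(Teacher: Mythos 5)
Your opening claim is false, and it undermines the whole ping-pong setup. An element of $\mcg^0(H_j)$ is only required to be homotopic to the identity as a map of $H_j$ (freely, not rel boundary), so Laudenbach-type results give you nothing: all meridian discs are freely homotopic in $H_j$ since they are contractible, and the homotopy to the identity does not restrict to $\partial H_j$. Concretely, a disc twist along a meridian $c$ acts trivially on $\pi_1(H_j)$ and hence lies in $\mcg^0(H_j)$, yet its restriction to $S$ is the Dehn twist $T_c$, which moves every meridian intersecting $c$. So $G_j$ preserves $\Delta_j$ only setwise; the nearest-point projection $\pi_j$ is coarsely $G_j$-\emph{equivariant}, not coarsely $G_j$-invariant. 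This also kills your key rigidity step: since $g\Delta_1=\Delta_1$ for $g\in G_1$, the quantity $\pi_2(g\Delta_1)$ is literally $\pi_2(\Delta_1)$, and under the sensible reinterpretation (that $g$ displaces $\pi_1(\Delta_2)$, i.e.\ the foot point $x_\gamma$ of the shortest geodesic $\gamma$ from $\Delta_2$ to $\Delta_1$, by a definite amount) the statement is simply false: the disc twist $T_c$ fixes $x_\gamma$ whenever $x_\gamma$ is disjoint from $c$. Hence no fixed ping-pong sets of the form you describe can exist, and your fallback of making $X_1,X_2$ depend on ``a suitable window of the word under consideration'' abandons the ping-pong lemma altogether. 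Your proposed rigidity mechanism (bounded geodesic image plus faithfulness of $\mcg(S)\to\aut(\cc(S))$) is also not the right tool: faithfulness gives no quantitative control, since a nontrivial mapping class can move every vertex of a bounded configuration by a bounded amount.

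The point you are missing is that the relevant dichotomy is not about displacing a single projection point but about coarsely fixing a \emph{long segment}. The paper builds, for each reduced word $g_1h_1\cdots g_ph_p$, the broken path obtained by concatenating translates of $\gamma$ joined by geodesics inside neighbourhoods of the (translated) disc sets; quasi-convexity bounds the backtracking of those connecting segments along $\gamma$ (Lemma \ref{geodesic in Delta}), and the possibility that $g_1\gamma$ fellow-travels $\gamma$ back towards $\Delta_2$ is excluded by Proposition \ref{only identity}: a nontrivial $g\in G_i$ cannot move every point of a sufficiently long subsegment of $\gamma$ by a bounded amount. That proposition is where Bowditch's contribution enters, via the acylindricity of the $\mcg(S)$-action on $\cc(S)$ (uniform finiteness of coarse stabilizers of far-apart pairs) combined with Otal's result that nontrivial elements of $G_i$ have infinite order. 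With the backtracking bounded, the local-to-global principle for quasi-geodesics in a $\delta$-hyperbolic space shows the broken path is a global quasi-geodesic once the Hempel distance exceeds $K_0$, so its endpoints differ and the word is nontrivial. Without an ingredient of acylindricity type your argument cannot close, and with it the natural formalization is the broken-geodesic argument rather than ping-pong.
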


We recall the terminology in Gromov's theory of hyperbolic metric spaces.
Let $X$ be a geodesic space, \ie  a metric space in which every pair of points can be connected by a geodesic segment.
Let $\triangle=\overline{PQ} \cup \overline{QR} \cup \overline{RP}$ be a geodesic triangle  with its vertices $P,Q$ and $R$ in $X$.
We consider a map, 
$\maptripod_{\triangle}$,
from $\triangle$ to a \lq\lq tripod" $T_\triangle$ which is 
an edge-wise isometry. We call this map the 
{\it comparison map} to $T_\triangle$.
The map $\maptripod_{\triangle}$ has a property that two points $a \in \overline{PQ}$ and $b \in \overline{QR}$ are identified under $\maptripod_{\triangle}$ if and only if 
$d(Q,a)=d(Q,b) \leq (P|R)_Q$, where the last term is the Gromov product defined by
$(P|R)_Q=\frac{1}{2}\left(d(P,Q)+d(R,Q)-d(P,R)\right)$.
The same holds even if we permute $P,Q$ and $R$.
Now, the triangle $\triangle$ is said to be 
{\it $\delta$-thin} when for each pair of points $a, b \in \triangle$ with $\maptripod_{\triangle}(a)=\maptripod_{\triangle}(b)$, we have $d_X(a,b) \leq \delta$.
A geodesic space $X$ is said to be {\it $\delta$-hyperbolic}
if every triangle in $X$ is $\delta$-thin.

In the following argument, we shall use the Gromov hyperbolicity of the curve complex $\cc(S)$
and the quasi-convexity of $\Delta_1$ and $\Delta_2$,
where $\Delta_1$ and $\Delta_2$ are the subcomplexes of $\cc(S)$
spanned by the simple 
closed curves
bounding disks in $H_1$ and $H_2$, respectively.
(See Masur-Minsky \cite{MaMi0} and \cite{MaMi}.)
Let $\delta$ be a positive constant such that $\cc(S)$ is $\delta$-hyperbolic.
Let $L$ be a constant depending only on the topological type of $S$  such that both $\Delta_1$ and $\Delta_2$ are $L$-quasi-convex: any geodesic segment connecting two points in $\Delta_1$ (resp. $\Delta_2$) lies in the $L$-neighbourhood of $\Delta_1$ (resp. $\Delta_2$).

Now, we start to prove that $\langle G_1, G_2\rangle$ is decomposed as $G_1*G_2$.
Take a shortest geodesic segment $\gamma$ connecting $\Delta_1$ and $\Delta_2$ in $\cc(S)$, and denotes its endpoint in $\Delta_1$ by $x_\gamma$ and the one in $\Delta_2$ by $z_\gamma$.
Consider 
a word
$g_1 h_1 \dots  g_p h_p$, where $g_j \in G_1, h_j \in G_2$ and suppose that none of them is the identity.
We show that the element of $\aut(\cc(S))$ determined by 
the word $g_1 h_1 \dots  g_p h_p$ is nontrivial.
In general, we need to consider the case where $g_1=1$ or $h_p=1$, but the argument needs no modification even in these cases.

Consider the translates $g_1 \gamma$  of $\gamma$.
Then the endpoint $x_{\gamma}$ of $\gamma$ and the endpoint $g_1x_{\gamma}$ of $g_1\gamma$ are  both contained in $\Delta_1=g_1 \Delta_1$.
We connect $x_{\gamma}$ and $g_1x_{\gamma}$ by a geodesic segment $\delta_1$,
which lies in the $L$-neighbourhood of $\Delta_1$.
Next we consider the translate $g_1h_1\gamma$, and connect the endpoint $g_1z_{\gamma}$ of
$g_1\gamma$
with the endpoint $g_1h_1z_{\gamma}$ of
$g_1h_1\gamma$
by a geodesic segment $\delta_1'$, which lies in the $L$-neighbourhood of 
$g_1\Delta_2=g_1h_1\Delta_2$.
Repeating this process, we construct a 
piecewise geodesic arc
$\alpha=\gamma \cup \delta_1 \cup g_1 \gamma \cup \delta_1' \cup g_1h_1\gamma \cup \delta_2 \cup g_1h_1g_2\gamma \cup \delta_2' \cup\dots \cup \delta_p' \cup g_1 h_1\dots g_ph_p \gamma$.

Let $d$ be either $\delta_j$ or $\delta_j'$ in $\alpha$, and
let $c$ be its preceding geodesic segment, that is, $g_1h_1 \dots h_{j-1} \gamma$ for $\delta_j$ or 
$g_1h_1 \dots g_jh_j\gamma$ for $\delta_j'$. 
We connect the endpoints of $c\cup d$ by a geodesic segment, and denote it by $e$.
Let $\triangle$ be the geodesic triangle $c \cup d \cup e$, and $\maptripod_{\triangle}: \triangle \rightarrow T_\triangle$ the comparison map to a tripod as explained above.
Then the $L$-quasi-convexity of $\Delta_1, \Delta_2$ implies the following lemma.

\begin{lemma}
\label{geodesic in Delta}
There is a constant $L'$ 
depending only on $\delta$ and $L$
 (hence only on the topological type of $S$) 
 such that 
the longest subsegment of $d$ that is identified with
a subsegment of $c$
under the map $\maptripod_{\triangle}$ has length at most $L'$.
\end{lemma}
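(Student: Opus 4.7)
The plan is to combine three inputs: the subsegment $d$ lies in the $L$-neighbourhood of a single translate of $\Delta_1$ or $\Delta_2$; the segment $c$ is a $G$-translate of the globally shortest geodesic $\gamma$ between $\Delta_1$ and $\Delta_2$; and $\cc(S)$ is $\delta$-hyperbolic. The first provides quasi-convex control on $d$, the second an ``incompressibility'' property for $c$, and the third lets me transfer properties between $c$ and $d$ across the identification $\maptripod_{\triangle}$.

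Label the vertices of $\triangle$ so that $c=\overline{PQ}$, $d=\overline{QR}$ and $e=\overline{PR}$, making $Q$ the shared endpoint of $c$ and $d$; the subsegment of $d$ identified with a subsegment of $c$ then has length $(P|R)_Q$ and is adjacent to $Q$. Inspection of the construction of $\alpha$ shows that both endpoints of $d$ lie in a single translate $\Delta$ of $\Delta_1$ (when $d=\delta_j$) or of $\Delta_2$ (when $d=\delta_j'$), using the stabiliser relations $G_i\Delta_i=\Delta_i$. The other endpoint $P$ of $c$ lies in a translate $\Delta'$ of the complementary $\Delta_i$, and because $c$ is a $G$-translate of the length-minimising $\gamma$, $c$ itself realises the distance from $\Delta$ to $\Delta'$. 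Now $L$-quasi-convexity places every point of $d$ within distance $L$ of $\Delta$, and $\delta$-hyperbolicity says that whenever $a\in d$ and $b\in c$ satisfy $\maptripod_{\triangle}(a)=\maptripod_{\triangle}(b)$ one has $d_{\cc(S)}(a,b)\le\delta$; hence every such $b$ lies within $L+\delta$ of $\Delta$.

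The final step is a shortest-path argument. If $b$ lies at distance $t$ from $Q$ along $c$ and $y\in\Delta$ satisfies $d(b,y)\le L+\delta$, then
\[
\length(c)\le d(y,\Delta')\le d(y,b)+d(b,P)\le (L+\delta)+\bigl(\length(c)-t\bigr),
\]
forcing $t\le L+\delta$. Applying this to the endpoint of the identified subsegment of $c$ furthest from $Q$ gives $(P|R)_Q\le L+\delta$, so I may take $L':=L+\delta$, a constant depending only on the topological type of $S$ via $L$ and $\delta$. I do not foresee a serious obstacle: the argument is a direct combination of quasi-convexity, $\delta$-hyperbolicity, and the shortest-path property of $\gamma$, and the only bookkeeping required is the identification of the translate $\Delta$ and the verification that $c$ realises $d(\Delta,\Delta')$, both of which follow routinely from the stabiliser properties $G_i\Delta_i=\Delta_i$.
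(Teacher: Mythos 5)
Your proof is correct and follows essentially the same route as the paper's: quasi-convexity places the identified part of $d$ within $L$ of the relevant disc set, $\delta$-thinness transfers this to the identified part of $c$, and the minimality of $\gamma$ (hence of its translate $c$) as a shortest connector between the two disc sets caps how far along $c$ from the shared vertex such points can lie, yielding the same constant $L'=L+\delta$. The only cosmetic difference is that the paper phrases the last step as $d(q,x)=d(q,\Delta_1)\le d(q,p)+d(p,\Delta_1)$ for the point $q$ on $c$, whereas you compare $d(\Delta,\Delta')$ with the detour through $y$ and $b$; these are the same use of minimality.
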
 
\begin{proof}
Set $x=c \cap d$, $y=d \cap e$ and $z=c\cap e$.
By translating the entire picture so that $c$ becomes $\gamma$, we have only to consider the case where either $x, y$ lie in $\Delta_1$ and $z$ lies in $\Delta_2$ or $x,y$ lie in $\Delta_2$ and $z$ lies in $\Delta_1$.
We may assume that $x$ and $y$ lie in $\Delta_1$, because
we can argue in the same way also in the case where they lie in $\Delta_2$.
Let $\ell$ be the length of 
the longest subsegment of $d$ which is identified with
a subsegment of $c$
under $\maptripod_{\triangle}$, and let $p$ be its endpoints other than $x$.
Then $d(x,p)=\ell$ and there is a point $q$ on $c$ such that $\maptripod_{\triangle}(p)=\maptripod_{\triangle}(q)$ and hence $d(p,q) \leq \delta$ by the $\delta$-thinness of the triangle $\triangle=c\cup d\cup e$.
Since $q$ lies in the shortest geodesic segment $\gamma$ connecting $\Delta_1$ and $\Delta_2$,
we have $d(q,x)=d(q,\Delta_1)$.
Hence $\ell=d(q,x)=d(q,\Delta_1)\le d(q,p)+d(p,\Delta_1)\le \delta+L$.
Thus, by setting $L'$ to be $L+\delta$, we are done.
\end{proof}

Next, let $f$ be the geodesic segment in $\alpha$ 
following $d$, and 
$h$
a geodesic segment connecting $e \cap c$ and the endpoint of $f$ other than $d \cap f$.
We consider the geodesic triangle 
$e \cup f \cup h$, 
which we denote by $\triangle'$, and the comparison map to its corresponding  tripod $\maptripod_{\triangle'}: \triangle' \rightarrow T_{\triangle'}$.

\begin{lemma}
\label{geodesic in Delta'}
There is a constant $L''$ depending only on 
the topological type of
$S$ such that 
the longest subsegment of $e$ identified with a subsegment of $f$ 
under the comparison map $\maptripod_{\triangle'}$ has length at most $L''$.
\end{lemma}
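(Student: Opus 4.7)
My plan is to parallel the proof of Lemma~\ref{geodesic in Delta}. Write $\ell$ for the length of the longest matched subsegment; the matching occurs at the common vertex $y=e\cap f$. I would pick $p\in f$ and $q\in e$ with $d(y,p)=d(y,q)=\ell$ and $\tau_{\triangle'}(p)=\tau_{\triangle'}(q)$, so that $d(p,q)\le\delta$ by the $\delta$-thinness of $\triangle'$. Without loss of generality, $d$ lies in the $L$-neighborhood of some translate $\Delta_1'$ of $\Delta_1$; then $f$, being a translate of $\gamma$ with endpoint $y\in\Delta_1'$, realizes the distance from $\Delta_1'$ to a translate of $\Delta_2$, and hence $d(p,\Delta_1')=\ell$. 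The goal is then to bound $d(q,\Delta_1')$, which will bound $\ell$ via $\ell\le d(p,q)+d(q,\Delta_1')$.

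To estimate $d(q,\Delta_1')$, I would apply the $\delta$-thinness of $\triangle=c\cup d\cup e$ to produce $r\in c\cup d$ with $d(q,r)\le\delta$. In the first case $r\in d$: then $r$ lies within $L$ of $\Delta_1'$, which immediately gives $\ell\le L+2\delta$. In the second case $r\in c$ at parameter $s$ from $x\in\Delta_1'$: since $c$ is shortest from $\Delta_1'$ to its translate of $\Delta_2$, $d(r,\Delta_1')=s$, and the tripod parametrization of $\triangle$ shows $s=\ell-(|e|-|c|)$. Substituting into the triangle inequality forces $|e|-|c|\le 2\delta$, which, combined with Lemma~\ref{geodesic in Delta} (which provides $|e|\ge|c|+|d|-2L'$), bounds $|d|$ by $2L'+2\delta$.

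The main obstacle is to finish this second sub-case, since the bound on $|d|$ does not immediately bound $\ell$. To dispose of it, I would apply the Lemma~\ref{geodesic in Delta} style argument to the auxiliary geodesic triangle with sides $d$, $f$, and a geodesic from $x$ to $w$: as $d$ lies in the $L$-neighborhood of $\Delta_1'$ and $f$ is a shortest geodesic from $\Delta_1'$, the matched portion of $d$ and $f$ at $y$ is bounded by $L+\delta$. The $\delta$-thinness of $\triangle$ then couples $e$ with $d$ on an initial segment from $y$ of length $(x|z)_y$, which in this sub-case is itself bounded by the previous step; combining this coupling with the bound on the matched portion of $d$ and $f$ at $y$ yields the desired bound on the matched portion of $e$ and $f$ at $y$, depending only on $L$, $L'$, and $\delta$. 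Setting $L''$ to be the maximum of the constants produced in the two cases completes the proof.
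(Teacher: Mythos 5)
Your first case ($r\in d$) is correct and is essentially the paper's argument when $d$ is long, and your derivation in the second case that $|e|-|c|\le 2\delta$ and hence $|d|\le 2L'+2\delta$ is also sound — it recovers, by a slightly different route, the paper's case division on $\length(d)$. The gap is in your final step. You have bounded $(x|z)_y$ (the overlap of $e$ with $d$ at $y$) and $(x|w)_y$ (the overlap of $f$ with $d$ at $y$), and you claim these "combine" to bound $\ell=(z|w)_y$, the overlap of $e$ with $f$ at $y$. No such implication holds in a $\delta$-hyperbolic space: already in a tree, two geodesics issuing from $y$ can each branch off a third geodesic immediately while fellow-travelling each other for an arbitrarily long time, so $(x|z)_y=(x|w)_y=0$ while $(z|w)_y$ is huge. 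Concretely, the configuration your argument fails to exclude is exactly the dangerous one: if the nontrivial element $g_1\in G_1$ happened to move every point of $\gamma$ a uniformly bounded distance, then $f=g_1\gamma$ would fellow-travel $c=\gamma$, and hence $e$, along almost its entire length, making $\ell$ comparable to $\length(\gamma)$ even though $d$, $(x|z)_y$ and $(x|w)_y$ all stay bounded.

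This is precisely the point where the paper's proof invokes Proposition \ref{only identity}, which rests on Bowditch's acylindricity theorem (Theorem \ref{acylindrical}): when $\length(d)$ is bounded, a long overlap of $e$ with $f$ forces a long subsegment $c''$ of $c=\gamma$ to be identified (through two tripod maps, hence within $2\delta$) with a subsegment of $f=g_1\gamma$, and comparing this identification with the isometry $g_1|_c$ shows that $g_1$ displaces every point of $c''$ by at most $L+L'+4\delta$; Proposition \ref{only identity} then bounds $\length(c'')$, and hence $\ell$. Some group-theoretic input of this kind is unavoidable — the lemma is not a consequence of coarse geometry and quasi-convexity alone — so your argument as written cannot be completed without importing the acylindricity step.
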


To prove this lemma, we shall use the following 
consequence of Bowditch's theorem established in \cite{Bo} on
acylindricity of the mapping class group action on the curve complex.
The proof is deferred to the following section.

\begin{proposition}
\label{only identity}
For any $E>0$, there exists $L_0>0$ depending only on $E$ and 
the topological type of
$S$ for which the following holds.
Suppose that  $\gamma$  contains a subsegment $\gamma'$ with length at least $L_0$ and $g \in G_i$ such that $d(z, gz) \leq E$ for every $z \in \gamma'$.
Then $g$ is the identity.
\end{proposition}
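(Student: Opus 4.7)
The plan is to derive this proposition from Bowditch's acylindricity theorem \cite{Bo}, which in the form I will use states: for every $r > 0$ there exist constants $R(r, S), N(r, S)$, depending only on $r$ and the topological type of $S$, such that for any two points $x, y \in \cc(S)$ with $d(x, y) \geq R(r, S)$,
\[
|\{h \in \mcg(S) : d(x, hx) \leq r \text{ and } d(y, hy) \leq r\}| \leq N(r, S).
\]
This gives finiteness of candidate elements, and the task is to upgrade this to uniqueness of the identity.

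First I would invoke this theorem with $r = E$ to obtain $R_0 := R(E, S)$ and $N_0 := N(E, S)$. Taking $L_0 \geq R_0$, the hypothesis that $g$ moves every point of $\gamma'$ by at most $E$, applied to the two endpoints of $\gamma'$, already places $g$ in a set of at most $N_0$ candidates in $\mcg(S)$. The remaining task is to eliminate all non-identity candidates by further enlarging $L_0$.

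For this elimination I would iterate, using that $G_i$ is a group. By the triangle inequality, $d(z, g^n z) \leq n E$ for every $z \in \gamma'$ and every $n \geq 1$, so each power $g^n$ lies in a Bowditch candidate set for the displacement scale $nE$, of cardinality at most $N(nE, S)$. Choose $k$ so that $k + 1 > N(kE, S)$ (which is possible given a mild growth estimate on $N(\cdot, S)$) and take $L_0 \geq R(kE, S)$. Pigeonhole applied to the $k+1$ elements $g, g^2, \ldots, g^{k+1}$ inside a set of size at most $N(kE, S)$ yields $g^m = g^n$ in $\aut(\cc(S))$ for some $1 \leq m < n \leq k + 1$, so $g^{n - m} = 1$ and $g$ has finite order in its action on $\cc(S)$, of order bounded only in terms of $E$ and the topology of $S$.

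The main obstacle is the final step: ruling out non-identity torsion candidates in $G_i$. A non-identity element $g$ of finite order must move some vertex $v$ of $\cc(S)$, with $d(v, gv) \geq 1$ in the curve complex metric. I expect to conclude by applying Bowditch's theorem once more, this time to pairs $(v, y)$ with $y \in \gamma'$ at distance at least $R(E, S)$ from $v$, combined with the uniform bound on orders of finite subgroups of $\mcg(S)$ coming from the topology of $S$; this pins $g$ inside a finite set that does not contain the identity but becomes incompatible with the hypothesis once $L_0$ exceeds a constant depending only on $E$ and the topological type of $S$. The technical heart of the proof lies in making the growth estimate on $N(\cdot, S)$ and the torsion-exclusion quantitative and uniform in this way.
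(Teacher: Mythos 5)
Your overall strategy (Bowditch acylindricity applied to powers of $g$, plus a pigeonhole) is in the same spirit as the paper's, but two steps that you flag as needing work are in fact the whole difficulty, and as written they do not go through. First, the pigeonhole step requires choosing $k$ with $k+1 > N(kE,S)$, i.e.\ sublinear growth of the Bowditch count $N(\cdot,S)$ in the displacement parameter. Bowditch's theorem gives no such growth estimate, and there is no reason to expect one: for fixed far-apart $x,y$ the set $\{h : d(x,hx)\le D,\ d(y,hy)\le D\}$ exhausts an infinite group as $D\to\infty$, and nothing prevents $N(D,S)$ from growing at least linearly, in which case $k+1>N(kE,S)$ never holds. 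The paper avoids this by never applying Bowditch at the growing scale $jE$: it first projects $\xi$ and $g^j\xi$ to the $r$-neighbourhoods $N_r(\Delta_i)$ of the quasi-convex set $\Delta_i$ (its Lemma \ref{qc}), and the contraction property of nearest-point projection converts the displacement $jE\le (N+1)E$ into a displacement at most $A$, where $A$ depends only on $\delta$ and not on $j$ or $E$. Bowditch is then applied once, at the single fixed scale $D=A$, with $N=N(A)$ determined \emph{before} the bound $B=(N+1)E$ is chosen — this ordering is what breaks the circularity you run into. (Letting $r$ range over $[L,L+R]$ also produces a whole subsegment of $\gamma$ of length $R$ on which all the $g^j$ have displacement $\le A$, supplying the two far-apart points Bowditch needs.)

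Second, your torsion-exclusion step is not a proof and I do not see how to make it one along the lines you sketch: a non-identity finite-order element of $\mcg(S)$ could perfectly well move every point of a long geodesic segment by at most $E$, and Bowditch's theorem only \emph{counts} such elements, it cannot exclude them; nor does a bound on orders of finite subgroups help. The paper sidesteps torsion entirely by quoting Otal's result (Proposition 1.7 of \cite{Ot}) that every nontrivial element of $G_i$ has infinite order — this is a special property of the subgroups $G_i$ coming from the handlebodies, not of $\mcg(S)$ at large. With that input, $g, g^2,\dots,g^{N+1}$ are automatically distinct, and their common small displacement on a segment of length $R$ contradicts the bound $N$ directly. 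Without some such torsion-freeness statement for $G_i$, the conclusion ``$g$ is the identity'' cannot be reached from acylindricity alone.
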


\begin{proof}[Proof of Lemma \ref{geodesic in Delta'}
 assuming Proposition \ref{only identity}]
As was done in the proof of Lemma \ref{geodesic in Delta}, we can assume that  
$f$ is  
$g_1\gamma$
and that 
$h$
connects the endpoint of $\gamma$ in $\Delta_2$ 
and that of $f$ in $g_1 \Delta_2$.
Let $f'$ denote  the longest subsegment of $f$ starting from $y=d\cap f$ that is identified with a subsegment 
of
$e$ under the comparison map $\maptripod_{\triangle'}$.
We may assume $\length(f') = L+2\delta+K$ for some $K>0$,
for otherwise the assertion of the lemma obviously holds.

First suppose that $\length(d) > L+L'+2\delta$
for $L'$ in Lemma \ref{geodesic in Delta}.
Let $w$ be a point on $f'$ such that $d(y, w)$ is slightly bigger than $L+2\delta$, which is guaranteed to exist since $\length(f')=L+2\delta+K > L+2\delta$.
Then there is a point $w'$ lying on $e$ with $\maptripod_{\triangle'}(w)=\maptripod_{\triangle'}(w')$ and hence $d(w,w') \leq \delta$.
By Lemma \ref{geodesic in Delta}, the longest subsegment of $d$ starting from $y$ that is identified with 
a subsegment of $e$
under $\maptripod_{\triangle}$ has length greater than 
$\length(d)-L'$,
which in turn is greater than $(L+L'+2\delta)-L'=L+2\delta$ by assumption.
Thus we may assume, by choosing $w$ so that  $L+2\delta<d(y,w)< \length(d)-L'$,
that there is a point $w''$ on $d$ with $\maptripod_{\triangle}(w')=\maptripod_{\triangle}(w'')$, $d(w',w'') \leq \delta$, 
and hence $d(w,w'') \leq 2\delta$.
By the $L$-quasi-convexity of $\Delta_i$, the distance from $w''$ to 
$\Delta_i$
is at most $L$.
Therefore, we can connect $w$ by an arc of length at most $L+2\delta$ to $\Delta_1$, which we denote by $\zeta$.
The length of  $f \setminus f'|_{[y,w]} \cup \zeta$ is less than $\length(f)$, where $f'|_{[y,w]}$ denotes the subsegment of $f'$ between $y$ and $w$.
This contradicts the fact that $f$ is the shortest geodesic segment connecting $g_1 \Delta_2$ to $\Delta_1$.

Next suppose that $\length(d)\leq L+L'+2\delta$.
Let $f''$ be the longest subsegment of $f$  identified with a subsegment of
$e$ under $\maptripod_{\triangle'}$ and then with that of $c$ under $\maptripod_{\triangle}$.
Then $\length(f'')>\length(f')-\length(d)\geq (L+2\delta+K)-(L+L'+2\delta)\geq K-L'$.
Let $c''$ be the subsegment of $c$ identified with $f''$ as in the above,
and let $\varphi:c''\to f''$ be the isometry arising from the identification.
Let $x''$ be the endpoint of $c''$ nearer to $x$.
Then $d(x,x'')\leq L'$ by Lemma \ref{geodesic in Delta}, and hence $d(g_1(x),g_1(x'')) \leq L'$.
Thus $g_1(x'')$ is contained in the component of $f\setminus f''$ containing $y=g_1(x)$.
Since the length of the component of $f\setminus f''$ is less than that of $d$, 
we have $d(g_1(x''), \varphi(x''))< \length(d)\leq L+L'+2\delta$.
Since both $g_1$ and $\varphi$ are isometries
into $f$,
this implies that $d(g_1(\xi), \varphi(\xi))< L+L'+2\delta$
for every $\xi\in c''$.
Hence we have
$d(\xi,g_1(\xi))\leq d(\xi,\varphi(\xi))+d(\varphi(\xi),g_1(\xi))<2\delta+ L+L'+2\delta$
for every $\xi\in c''$.
Therefore, we see by 
Proposition \ref{only identity} 
that
there is a constant $L_0$ depending only on $S$ which bounds 
$\length(c'')=\length(f'')>K-L'$
from above.
Thus, $K$ is bounded by a constant $K'$ depending only on $S$.
By setting $L''$ to be $L+2\delta+K'$, we are done.
\end{proof}

\begin{proposition}
\label{quasi-geodesic}
There are constants $A,B$ depending only on $L$ and $\delta$ such that $\alpha$ is an $(A,B)$-quasi-geodesic if $K_0$ is large enough.
\end{proposition}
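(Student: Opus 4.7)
The plan is to apply a standard local-to-global principle for quasi-geodesics in the $\delta$-hyperbolic space $\cc(S)$: a piecewise geodesic is globally an $(A,B)$-quasi-geodesic provided its backtracking at each vertex is uniformly bounded and its segments are sufficiently long compared to this bound. In our setting, the $\gamma$-translates in $\alpha$ have length at least $K_0$ (the Hempel distance), and Lemmas \ref{geodesic in Delta} and \ref{geodesic in Delta'} are tailored to supply the required backtracking bounds.

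First I would observe that at every vertex of $\alpha$ where a $\gamma$-translate meets a $\delta$-type segment, Lemma \ref{geodesic in Delta} bounds by $L'$ the Gromov product of the two neighboring vertices (equivalently, the length of the overlap between the two segments at their common endpoint, read off from the tripod comparison map). The analogous bound at the other endpoint of each $\delta$-type segment follows by applying Lemma \ref{geodesic in Delta} to $\alpha$ with reversed orientation, since the reversed path still has the same alternating structure of $\gamma$-translates followed by $\delta$-type segments and each reversed $\gamma$-translate is still the shortest geodesic between the relevant pair of meridian sets. If every $\delta$-type segment were long, say of length at least $2L'+\delta$, this vertex bound combined with the length lower bound $K_0$ on the $\gamma$-translates would already yield the conclusion via the standard criterion.

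The main obstacle is precisely that the $\delta$-type segments have no a priori length lower bound; a short $\delta$-type segment $d$ between two $\gamma$-translates $c$ and $f$ may be entirely consumed by the backtracking bounds at its two endpoints, preventing the direct application. Lemma \ref{geodesic in Delta'} is designed to resolve this: it bounds by $L''$ the long-range Gromov product $(v_{i-1}|v_{i+2})_{v_{i+1}}$, which measures the overlap at the vertex $v_{i+1}=d\cap f$ between the shortcut $e$ closing the triangle $c\cup d\cup e$ and the next $\gamma$-translate $f$. This allows one to apply the standard local-to-global criterion to a coarsened version of $\alpha$ in which each short $\delta$-type segment is absorbed into the shortcut $e$ replacing $c\cup d$; the resulting piecewise geodesic has segments of length at least $K_0 - O(L')$ and uniform vertex backtracking at most $\max(L',L'')$. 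Choosing $K_0$ larger than an explicit function of $\delta, L, L', L''$ then yields the desired $(A,B)$-quasi-geodesic bounds for $\alpha$, with $A, B$ depending only on $\delta$ and $L$.
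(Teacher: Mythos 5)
Your proposal is correct and follows essentially the same route as the paper: both arguments feed the overlap bounds of Lemmas \ref{geodesic in Delta} and \ref{geodesic in Delta'} into the standard local-to-global principle for quasi-geodesics in a $\delta$-hyperbolic space, using the lower bound $K_0$ on the lengths of the $\gamma$-translates to guarantee the local hypothesis at the required scale. The only cosmetic difference is that the paper handles possibly short $\delta$-type segments by showing directly that each consecutive triple $c\cup d\cup f$ is a $(1,2L'+2L''+4\delta)$-quasi-geodesic (the additive constant absorbs a short $d$), rather than by your explicit coarsening of $c\cup d$ into the shortcut $e$.
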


\begin{proof}[Proof of Proposition \ref{quasi-geodesic}]
We shall first show that there are $A,B$ as above such that $c\cup d \cup f$ is an $(A,B)$-quasi-geodesic.
By Lemma \ref{geodesic in Delta}, except for geodesic segments starting from $x$ of lengths at most $L'$, one on $c$ and the other on $d$, each point of  $c \cup d$ is within the distance $\delta$ from $e$.
In the same way, we see that by Lemma \ref{geodesic in Delta'}, except for geodesic segments starting from $y$ of length $L''$, one on $e$ and the other on $f$, every point on $e \cup f$ is within the distance $\delta$ from 
$h$.
These imply that $c \cup d \cup f$ is a $(1, 2L'+2L''+4\delta)$-quasi-geodesic.

This holds for every three consecutive arcs constituting $\alpha$ that has a translate of $\delta$ or $\delta'$ in the middle.
Now, in general in a $\delta$-hyperbolic geodesic space, for any $(C,D)$ there are $(A,B)$ and $l$ such that an arc each of whose subarc of length less than $l$ is $(C,D)$-quasi-geodesic is always $(A,B)$-quasi-geodesic itself, where $A,B$ and $l$ depend only on $C,D$ and $\delta$.
(See \cite[Chap.\ 3, Th\'{e}or\`eme 1.4]{CDP} or 
\cite[Chap.\ III, Theorem 1.13]{BH}.)
Therefore, by taking $K_0$, which bounds the length of $\gamma$ from below, to be large enough, we see that there are $A,B$ depending only on $\delta$ and $L$ such that $\alpha$ is an $(A,B)$-quasi-geodesic.
\end{proof}

Let $\alpha'$ be the subarc of $\alpha$ obtained by deleting the last geodesic segment $g_1h_1 \dots g_ph_p\gamma$ from $\alpha$.
Proposition \ref{quasi-geodesic} implies that there is a constant $C$ depending only on $A$ and $B$ such that  the endpoints of $\alpha'$, which are $x_\gamma$ and  $g_1h_1 \dots g_ph_p x_\gamma$, cannot be the same if 
$\length(\gamma) \geq C$.
Therefore, assuming $K_0$ to be greater than $C$, we see that the word
$g_1h_1 \dots g_ph_p$ represents a non-identity element.
As is noticed at the beginning of the proof,
the same conclusion holds when $g_1=1$ or $h_p=1$.
Hence $G=G_1*G_2$.
This completes the proof of Theorem \ref{free product}.

\section{Bowditch's theorem and its consequence}
In this section, we shall prove Proposition \ref{only identity}
by using the following acylindricity of the mapping class group action on the curve complex proved by Bowditch \cite{Bo}.
\begin{otheorem}[Bowditch \cite{Bo}]
\label{acylindrical}
For any given $D>0$, there are $R>0$ and 
a positive integer $N$
depending only on $D$ and the topological type of $S$ with the following property.
Let $x, y$ be two points in 
$\cc(S)$
with $d(x,y) \geq R$.
Then there are at most $N$ elements $g$ of $\mcg(S)$ such that both $d(x, g x) \leq D$ and $d(y, g y) \leq D$.
\end{otheorem}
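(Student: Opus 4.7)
The plan is to follow Bowditch's strategy from \cite{Bo}, which rests on the theory of tight geodesics in the curve complex. First, given $x,y \in \cc(S)$ with $d(x,y) \geq R$ (with $R$ to be chosen later), I would fix a tight geodesic $\pi$ from $x$ to $y$; such a geodesic is guaranteed to exist by Masur--Minsky's construction. For any $g \in \mcg(S)$ satisfying $d(x,gx) \leq D$ and $d(y,gy) \leq D$, the image $g\pi$ is another tight geodesic from $gx$ to $gy$, and the $\delta$-hyperbolicity of $\cc(S)$ together with the small endpoint displacement forces $\pi$ and $g\pi$ to fellow-travel along a long middle segment within some Hausdorff distance $K=K(\delta,D)$.

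The next step would be to exploit the Bounded Geodesic Image Theorem of Masur--Minsky to constrain the combinatorial behaviour of $g$ along $\pi$. A long segment of $\pi$ that fellow-travels its image $g\pi$ must have uniformly bounded subsurface projection to every non-annular subsurface $Y$ whose boundary stays far from the segment; this restricts $g$'s action on the middle portion of $\pi$ to one of finitely many combinatorial patterns. Combined with the tightness property, which controls the vertex sequence of $\pi$ in terms of its endpoints, this reduces the problem to bounding, for each such pattern, the number of mapping classes $g$ that can realize it.

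The main obstacle, and Bowditch's principal technical contribution, is the \emph{local finiteness} phenomenon for tight geodesics: between any two sufficiently far apart vertices of $\cc(S)$ there are only finitely many tight geodesics, and for each such geodesic only finitely many mapping classes $g$ can send it into a fellow-traveler position with itself while displacing both endpoints by at most $D$. The key ingredient here is the analysis of the annular subsurface projections along $\pi$: the fellow-travelling constraint, together with the Behrstock inequality and the structure of tight sequences, bounds the Dehn twist coefficients of $g$ about the vertices of $\pi$ in terms of $D$ and the topology of $S$ only.

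Once this local finiteness is in hand, I would choose $R$ sufficiently large, depending on $D$ and the topological type of $S$, so that the length of the fellow-travelling segment of $\pi$ and $g\pi$ is dominated by a constant determined by the combined combinatorial data (vertex sequence plus twist coefficients) on which $\mcg(S)$ acts effectively. This forces any $g$ satisfying the displacement bound at both endpoints to lie in a finite set of cardinality at most $N=N(D,S)$, yielding the desired $R$ and $N$.
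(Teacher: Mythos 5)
The paper does not prove this statement at all: Theorem \ref{acylindrical} is quoted verbatim as Bowditch's acylindricity theorem from \cite{Bo} and used as a black box in the proof of Proposition \ref{only identity}. So there is no proof in the paper to compare yours against; the only meaningful question is whether your sketch would stand on its own as a proof, and it would not.

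Your outline names the right circle of ideas from Bowditch's actual argument (tight geodesics, fellow-travelling forced by $\delta$-hyperbolicity and small endpoint displacement, control of subsurface projections along the geodesic), but the entire difficulty of the theorem is concentrated in the step you label ``Bowditch's principal technical contribution'' and then simply assert: the uniform local finiteness of tight geodesics. What is needed is not merely that there are finitely many tight geodesics between two vertices, but that the slices of the union of all tight geodesics between two points (and more generally between subsets of bounded diameter) have cardinality bounded by a constant depending only on the topological type of $S$ --- uniformity in $x$ and $y$ is essential, since the theorem demands a single $N=N(D,S)$. Establishing this is the content of Bowditch's paper and requires a genuinely new idea (his a priori intersection-number bounds for tight geodesics, proved via a hyperbolic-geometry/singular-surface argument, or in later treatments the hierarchy machinery); it does not follow from the Bounded Geodesic Image Theorem plus the Behrstock inequality in the way your second and third paragraphs suggest, and the Behrstock inequality is not an ingredient of Bowditch's proof. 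As written, your argument assumes the hard part of the theorem in order to prove it, so there is a genuine gap. Since the paper itself treats this as an external citation, the honest options are either to cite \cite{Bo} as the authors do, or to reproduce Bowditch's a priori bounds in full, which is a substantial undertaking.
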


Before starting the proof of Corollary \ref{only identity},
we prepare the following lemma.

\begin{lemma}
\label{qc}
There exists a constant 
$A>0$
depending only on $Q$ and $\delta$ for which the following holds.
Let $Y$ be a $Q$-quasi-convex set in a $\delta$-hyperbolic geodesic space $X$ and $\pi : X \rightarrow Y$ the nearest point retraction.
Then for any 
$B>0$,
there exists 
$C>0$,
depending only on $Q$, $\delta$ and $B$,
such that for any points $z, w \in X$ with $d(z,w) \leq B$ and $d(z, Y)\geq C, d(w, Y) \geq C$, we have $d(\pi(z), \pi(w)) \leq A$.
\end{lemma}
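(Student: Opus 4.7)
The plan is to argue by contradiction via the thinness of geodesic quadrilaterals in a $\delta$-hyperbolic space, combined with the defining inequality of the nearest-point retraction. Set $a=\pi(z)$ and $b=\pi(w)$, and consider the geodesic quadrilateral with sides $[z,a]$, $[a,b]$, $[b,w]$, $[w,z]$. Splitting the quadrilateral by a diagonal into two geodesic triangles and applying $\delta$-thinness twice shows that every point of the side $[a,b]$ lies within $2\delta$ of a point on one of the three other sides. Moreover, since $a,b\in Y$ and $Y$ is $Q$-quasi-convex, the segment $[a,b]$ itself is contained in the $Q$-neighbourhood of $Y$.

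Aiming for a contradiction, suppose $d(a,b)>2(Q+4\delta)$ and take $m$ to be the midpoint of $[a,b]$, so that $d(m,a)=d(m,b)>Q+4\delta$. Choose $m'$ on one of the other three sides with $d(m,m')\le 2\delta$. If $m'\in[z,a]$, then $d(m',Y)\le Q+2\delta$, so there exists $y\in Y$ with $d(z,y)\le d(z,m')+Q+2\delta = d(z,a)-d(m',a)+Q+2\delta$; the nearest-point property $d(z,a)\le d(z,y)$ forces $d(m',a)\le Q+2\delta$, whence $d(m,a)\le Q+4\delta$, contradicting the choice of $m$. The case $m'\in[b,w]$ is symmetric and likewise yields $d(m,b)\le Q+4\delta$.

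The remaining case is where the hypothesis $d(z,w)\le B$ enters: if $m'\in[z,w]$, then $d(m',Y)\le Q+2\delta$, while every $p\in[z,w]$ satisfies $d(p,Y)\ge d(z,Y)-d(z,p)\ge C-B$ by the triangle inequality. Choosing $C$ strictly larger than $B+Q+2\delta$ from the outset rules out this case as well. With such a $C$, all three cases lead to contradictions whenever $d(a,b)>2(Q+4\delta)$, so it suffices to take $A:=2(Q+4\delta)$ and, say, $C:=B+Q+2\delta+1$.

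There is no deep obstacle here; the entire point of the argument is the bookkeeping that keeps the constant $A$ independent of $B$. The case analysis is organized precisely so that the two quadrilateral-internal cases produce a bound in terms of $Q$ and $\delta$ alone, while the $B$-dependence is absorbed entirely into the threshold $C$ through the one case in which the opposite side $[z,w]$ is involved.
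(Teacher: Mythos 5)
Your proof is correct and is essentially the paper's argument: both split the quadrilateral $z,\pi(z),\pi(w),w$ into two triangles, push a deep point of $[\pi(z),\pi(w)]$ onto one of the other three sides via $\delta$-thinness, and dispose of the three cases using quasi-convexity, the nearest-point property, and the hypothesis $d(\cdot,Y)\ge C$ respectively. The only (cosmetic) differences are that you take the midpoint of $[\pi(z),\pi(w)]$ and invoke the defining inequality $d(z,\pi(z))\le d(z,y)$ directly, where the paper chooses an asymmetric point and uses the identity $d(q,\pi(z))=d(q,Y)$ for $q$ on the geodesic $[z,\pi(z)]$; the resulting constants differ immaterially.
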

\begin{proof}
This is a consequence of the $\delta$-thinness of triangles in $X$.
We let $A$ be $2Q+3\delta+1$, and for given $B$, we set $C=B+Q+2\delta+1$.
Let $z,w$ be points in $X$ with $d(z,w) \leq B$ and $d(z, Y)\geq C, d(w, Y) \geq C$.
For two points in $X$, we shall denote a geodesic segment connecting them (which we choose) by putting bar over them.
We have only to show that $\length(\overline{\pi(z) \pi(w)}) \leq A$.

Suppose not.
Then there is a point $p \in \overline{\pi(z) \pi(w)}$ with $d(p, \pi(z)) \geq  Q+\delta+1/2$ and $d(p, \pi(w)) \geq Q+2\delta+1/2$.
We consider two geodesic triangles $\triangle_w=\overline{zw} \cup \overline{w \pi(w)} \cup \overline{z \pi(w)}$ and $\triangle_z=\overline{z \pi(z)} \cup \overline{\pi(z)\pi(w)} \cup \overline{z \pi(w)}$.
By the $\delta$-thinness of $\triangle_z$, there is 
a point $q$ on either  $\overline{z\pi(z)}$ or $\overline{z \pi(w)}$ with $\maptripod_{\triangle_z}(q)=\maptripod_{\triangle_z}(p)$ and $d(p,q) \leq \delta$.
Suppose that  $q$ lies on $\overline{z \pi(z)}$.
Then $d(q, \pi(z))=d(q, Y) \leq d(q,p)+d(p, Y) \leq \delta+Q$.
On the other hand, since $\maptripod_{\triangle_z}(q)=\maptripod_{\triangle_z}(p)$, we have $d(q,\pi(z))=d(p, \pi(z))\geq Q+\delta+1/2$.
This is a contradiction.
This contradicts the assumption 
Thus we see that $q$ lies on $\overline{z \pi(w)}$.

Now we turn to consider the triangle $\triangle_w$.
There is a point $r$ on either $\overline{zw}$ or $\overline{w\pi(w)}$ such that $\maptripod_{\Delta_w}(q)=\maptripod_{\Delta_w}(r)$ and hence $d(q,r) \leq \delta$ by the $\delta$-thinness of $\triangle_w$.
Suppose first that $r$ lies on $\overline{zw}$.
Then $d(r, w) \leq d(z,w) \leq B$.
Therefore, we have $d(w, Y) \leq d(w,r)+d(r,q)+d(q,p) +d(p,Y)\leq B+2\delta+Q$.
This contradicts the assumption that
$d(w,Y) \geq C=B+Q+2\delta+1$.

Suppose next that $r$ lies on $\overline{w \pi(w)}$.
Then $d(r,\pi(w))=d(r,Y) \leq d(r,q)+d(q,p)+d(p,Y) \leq 2\delta+Q$.
On the other hand, the equalities $\maptripod_{\triangle_w}(r)=\maptripod_{\triangle_w}(q), \maptripod_{\triangle_z}(q)=\maptripod_{\triangle_z}(p)$ imply $d(r, \pi(w))=d(q,\pi(w))=d(p,\pi(w))\geq Q+2\delta+1/2$.
This is a contradiction.
Thus we have proved that $d(\pi(z), \pi(w)) \leq A$.
\end{proof}

\begin{proof}[Proof of Proposition \ref{only identity}]
Recall that $\Delta_i\, (i=1,2)$ is $L$-quasi-convex.
This implies that for $r \geq L$, the $r$-neighbourhood, $N_r(\Delta_i)$, of $\Delta_i$ is 
$2\delta$-quasi-convex.
(See \cite[Chap 10, Proposition 1.2]{CDP}.)
Let $\pi_r:\cc(S)\to N_r(\Delta_i)$ be a nearest point projection.
Letting $Q$ in 
Lemma \ref{qc} 
be $2\delta$, 
we get a constant $A>0$,
which satisfies the following condition:
For any $B>0$, there exists $C>0$, 
such that for any points $z, w \in \cc(S)$ with $d(z,w) \leq B$ and 
$d(z, N_r(\Delta_i))\geq C, d(w, N_r(\Delta_i)) \geq C$,
we have $d(\pi_r(z), \pi_r(w)) \leq A$ for every $r\geq L$.

Regarding this $A$ as $D$ in Theorem \ref{acylindrical}, 
we get a constant $R$ and a positive integer $N$,
which satisfy the following condition:
For two points $x, y$  in $\cc(S)$  with $d(x,y) \geq R$,
there are at most $N$ elements $g$ of $\mcg(S)$ 
such that both $d(x, g x) \leq A$ and $d(y, g y) \leq A$.

By setting $B=(N+1)E$ for a given $E$ in our statement in 
Proposition \ref{only identity}
and recalling the choice of the constant $A$ using Lemma \ref{qc},
we obtain a constant $C$
which satisfies the following condition:
For any points $z, w \in \cc(S)$ with $d(z,w) \leq B$ and 
$d(z, N_r(\Delta_i))\geq C, d(w, N_r(\Delta_i)) \geq C$,
we have $d(\pi_r(z), \pi_r(w)) \leq A$.
Here $r$ is any real number with $r\geq L$.

After this preparation, we now prove the conclusion of our proposition holds
if we let $L_0$ be $L+C+R$.
Suppose on the contrary 
that there is a subsegment $\gamma'$ 
of $\gamma$
of length $L_0$ on which 
a non-trivial element $g\in G_i$ translates points within the distance $E$.
Let $\xi$ be the point on $\gamma'$ farthest from $\Delta_i$.
Then $d(\xi, \Delta_i)\ge L_0=L+C+R$.
Thus, for each $r$ with $L\leq r\leq L+R$,
we have $d(\xi, N_r(\Delta_i))\ge C$.
On the other hand, for each $j$ with $1\le j\le N+1$,
we have $d(\xi,g^j(\xi))\leq jE\leq (N+1)E=B$.
Hence $d(\pi_r(\xi),\pi_r(g^j(\xi)))\le A$
for each $r$ with $L\leq r\leq L+R$.
Note that we may choose $\pi_r$ so that $\pi_r(g^j(\xi))=g^j(\pi_r(\xi))$.
Thus we have $d(\pi_r(\xi),g^j(\pi_r(\xi)))\le A$
for each $r$ with $L\leq r\leq L+R$
and for each $j$ with $1\le j\le N+1$.
Since we can assume that $\pi_r(\xi)$ lies on $\gamma$, this shows that there is a subsegment $\gamma''$ of $\gamma$ with length $R$ on which all points are translated within the distance $A$ by any $g^j$ for $j=1, \dots N+1$.
Since any element in $G_i$ has infinite order as was shown in the proof of Proposition 1.7 in Otal \cite{Ot}, we see that $g, \dots , g^{N+1}$ are all distinct.
This contradicts Theorem \ref{acylindrical}.
\end{proof}

\section{Non-trivial curves}
In this section and the next, we only consider Heegaard splittings, for our argument relies on the construction of  model manifolds for Heegaard splittings 
due to Namazi \cite{Na, Na1} and 
its generalisation by 
Namazi-Souto \cite{NS} and Brock-Minsky-Namazi-Souto \cite{BMNS}.
We believe that we can obtain the same result for bridge decompositions since their theory is valid in more general settings including the case where the hyperbolic manifolds have torus cusps as is suggested in  \cite{BMNS}.

Before stating the theorem, we shall review the definitions of subsurface projections due to Masur-Minsky \cite{MaMi2} and of bounded combinatorics introduced by Namazi \cite{Na}.
Let $S$ be a closed surface and 
$Y$ a connected open incompressible subsurface of $S$,
which is either an annulus or has negative Euler characteristic.
The curve complex $\cc(Y)$ is defined in the same way as $\cc(S)$ unless $Y$ is either a once-punctured sphere or four-times punctured sphere or an annulus.
When $Y$ is either a once-punctured torus or a four-times punctured sphere, $\cc(Y)$ is a one-dimensional simplicial complex whose vertices are the isotopy classes of essential simple closed curves and where two vertices are connected if their geometric intersection number is the least possible: $1$ when $Y$ is a once-punctured torus and $2$ when $Y$ is a four-times punctured sphere.
When $Y$ is an annulus, we consider its compactification $\bar Y$, and $\cc(Y)$ is defined to be a one-dimensional simplicial complex whose vertices are the isotopy classes relative to the endpoints of essential arcs and where two vertices are connected if they are realised to be disjoint.

Let $c$ be an essential simple closed curve on $S$, and isotope it so that $\Fr Y \cap c$ is transverse and there are no inessential intersection in $Y \cap c$.
If $Y$ has negative Euler characteristic, 
we consider, roughly speaking, all possible essential simple closed curves obtained by connecting endpoints of $F \cap c$ by arcs on $\Fr Y$, and denote the set consisting of such simple closed curves by $\pi_Y(c)$ regarding it as a subset of $\cc(Y)$.
If there are no such simple closed curves, we define $\pi_Y(c)$ to be the empty set.
When $Y$ is an annulus, $\pi_Y(c)$ 
is defined to be 
the set of
points in $\cc(Y)$ determined by the lifts of $c$ to the covering of $S$ associated to $\pi_1(Y)$ and its natural compactification as a hyperbolic surface.
For a subset $C$ of $\cc(S)$, we define $\pi_Y(C)$ to be $\cup_{c \in C} \pi_Y(c)$.
Also, we can define the projections of multi-curves and clean markings in the same way.
See \cite[Section 2]{MaMi2} for precise definition.

\begin{Def}
\label{def_bounded-combinatorics}
Let $M=H_1 \cup H_2$ be a Heegaard splitting along $S$ 
and let $\Delta_1$ and $\Delta_2$ be the subsets of $\cc(S)$
consisting of the meridians of $H_1$ and $H_2$, respectively.
For a positive real number $R$,
we say that the decomposition has 
{\it $R$-bounded combinatorics} if 
there are handlebody pants decompositions $P_1\subset \Delta_1$ and 
$P_2\subset \Delta_2$ of $H_1$ and $H_2$,
such that the distance 
between
 $P_1$ and $P_2$ in $\cc(S)$ is equal to the distance between $\Delta_1$ and $\Delta_2$,
and that
the distance in $\cc(Y)$ between $\pi_Y(P_1)$ and $\pi_Y(P_2)$ is bounded by $R$ for any proper 
incompressible
subsurface $Y$ of $S$
which is either an annulus or has negative Euler characteristic.
\end{Def}

\begin{theorem}
\label{main1}
For any given positive constant $R$, there is a constant 
$K_0$
depending only on $R$ and the 
topological type
of $S$ with the following property.
If a closed hyperbolic $3$-manifold $M$ has a Heegaard 
splitting
$M=H_1\cup_S H_2$ with $R$-bounded combinatorics whose Hempel distance  is greater than or equal to $K_0$,
then there is 
a non-empty open set
$O$ in $\pl(S)$ such that 
no simple 
closed curves
in $S$
representing a point in $O$ 
is null-homotopic in $M$.
Furthermore, two simple 
closed curves
in $S$
representing distinct points 
in $O$
cannot be homotopic in $M$.
\end{theorem}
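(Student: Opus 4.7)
The plan is to exploit the bi-Lipschitz model manifold machinery of Namazi \cite{Na, Na1}, Namazi--Souto \cite{NS}, and Brock--Minsky--Namazi--Souto \cite{BMNS}. Under $R$-bounded combinatorics and Hempel distance at least a suitable threshold $K_0$, these works furnish a model manifold $M_{\mathrm{mod}}$ together with a bi-Lipschitz homeomorphism $\Phi\colon M_{\mathrm{mod}}\to M$, where $M_{\mathrm{mod}}$ is decomposed as two handlebody ``caps'' $H_j^{\mathrm{mod}}$ modelled on hyperbolic structures on $H_j$ (geometrically finite or degenerate according to the data), glued along an almost-product collar $C\cong S\times[-1,1]$ carrying an approximately standard product hyperbolic geometry. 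The bi-Lipschitz constant depends only on $R$ and the topological type of $S$, and the parameters of each cap are controlled by the subsurface projections of the disk set $\Delta_j$.

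The first step is to produce the open set $O$. Form the closure $\overline{G(\Delta_1\cup\Delta_2)} \subset \pl(S)$. By Theorem \ref{free product}, every element of this orbit is coded by a reduced word in $G_1*G_2$, and the quasi-geodesic $\alpha$ built in that proof provides a uniform lower bound on the distance in $\cc(S)$ between distinct $G$-translates of $\Delta_1\cup\Delta_2$. A ping-pong argument, combined with the $L$-quasi-convexity of the $\Delta_j$, shows that this closure is nowhere dense in $\pl(S)$, so its complement contains a non-empty open subset; take $O$ to be one such component, chosen as a small neighbourhood of a pseudo-Anosov lamination that avoids the closure. Next, for a simple closed curve $\alpha$ on $S$ whose projective class lies in $O$, the image of its $M$-geodesic representative $\alpha^*$ under $\Phi^{-1}$ must remain inside the middle collar $C$: if $\alpha^*$ penetrated deeply into a cap $\Phi(H_j^{\mathrm{mod}})$, the model geometry, together with the standard interpretation of the end invariants of the cap as limits of meridians, would force $\alpha$ to accumulate on $\overline{\Delta_j}$, contradicting $[\alpha]\in O$. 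Confining $\alpha^*$ to $\Phi(C)$ and invoking the approximate product structure shows that $\alpha^*$ is close to the $S$-geodesic representative of $\alpha$ and is in particular essential, establishing (a).

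For (b), suppose that $\alpha,\beta$ represent distinct points of $O$ and are freely homotopic in $M$. The homotopy pulls back to $M_{\mathrm{mod}}$, where by the argument above both geodesic representatives already lie in $C$. Straightening the homotopy against the product structure of $C$ and tracking the induced action on the $S$-slice of $C$ produces an element $\varphi\in\mcg(S)$, induced by the ambient homotopy in $M$, with $\varphi(\alpha)=\beta$ in $\cc(S)$. A Namazi--Souto type asymptotic faithfulness argument \cite[Theorem 1.6]{Na}, \cite[Theorem 6.1]{NS}, applied to each handlebody inclusion $\pi_1(H_j)\hookrightarrow\pi_1(M)$ in turn, forces the factors of $\varphi$ to live in $\mcg^0(H_1)$ and $\mcg^0(H_2)$; together with the decomposition $G=G_1*G_2$ from Theorem \ref{free product} this places $\varphi\in G$. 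But $O$ was chosen to meet each $G$-orbit in at most one point, so $\alpha=\beta$. The main obstacle is this last identification: turning ``a homotopy in $M$ between two curves close to $S$ is induced by an element of $G$'' into a rigorous claim. This requires combining the quasi-isometric estimates of the BMNS model across the interface between $C$ and the caps with the subsurface-projection control given by $R$-bounded combinatorics; only the latter ensures that the model actually resembles $M$ uniformly, so that word-length in $G_1*G_2$ corresponds to geometric displacement in $M$ in a controlled way.
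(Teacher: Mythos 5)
Your overall framework (the bi-Lipschitz model of Namazi and Brock--Minsky--Namazi--Souto, with the middle collar playing the central role) is the right one and matches the paper, but both halves of your argument have genuine gaps.

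First, the construction of $O$. You take $O$ inside the complement of $\overline{G(\Delta_1\cup\Delta_2)}$ in $\pl(S)$ and assert that this closure is nowhere dense by a ping-pong argument derived from Theorem \ref{free product}. That theorem gives the free product structure of $G$, but it says nothing about the topology of the orbit closure in $\pl(S)$; nowhere-density of $\overline{G(\Delta_1\cup\Delta_2)}$ is a separate (and in this generality unproved) assertion --- compare Theorem \ref{thm_lee-sakuma}(2), where the analogous statement for $2$-bridge links required substantial work. More importantly, even granting such an $O$, your key geometric step --- that a geodesic $\alpha^*$ penetrating deeply into a cap forces $[\alpha]$ to ``accumulate on $\overline{\Delta_j}$'' --- is exactly the hard content and is left unargued: a single projective class does not accumulate on anything, and you supply no quantitative link between the depth of penetration into a cap and the distance from $[\alpha]$ to $\overline{\Delta_j}$ in $\pl(S)$. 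The paper avoids all of this: it takes a measured lamination $\lambda$ of rational depth $0$ realised by a pleated surface deep inside $V_1\cap V_2$, and uses the uniqueness of such realisations together with a compactness argument for sequences of pleated surfaces to produce an open neighbourhood $O$ of $[\lambda]$ all of whose laminations are realised deep in the collar; non-triviality of the curves then follows from the stability of quasi-geodesics under the bi-Lipschitz pull-back, not from any statement about the disk sets.

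Second, part (b). A free homotopy in $M$ between two closed curves lying on $S$ does not induce an element $\varphi\in\mcg(S)$; a homotopy of loops is not an ambient homeomorphism, so this step is a category error. Moreover, your conclusion relies on $O$ meeting each $G$-orbit in at most one point, which you never arranged and which is essentially the content of Theorem \ref{discontinuous open set} --- itself deduced from Theorem \ref{main1} in the paper, so the argument as written is circular. The correct argument is direct: both curves have geodesic representatives $c_1^*$, $c_2^*$ in $V_1\cap V_2$; if the curves were freely homotopic in $M$, their $M$-geodesics would coincide, and pulling back through the bi-Lipschitz model and straightening the resulting quasi-geodesic lines shows that $c_1^*$ and $c_2^*$ are homotopic inside $V_1\cap V_2\cong S\times(0,1)$, hence that $c_1$ and $c_2$ are homotopic in $S$ --- contradicting that they represent distinct points of $\pl(S)$.
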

In the following argument, we fix a homeomorphism type of $S$ once and for all.

As is noted in the above,
our proof of this theorem relies on the work of Namazi \cite{Na, Na1} and its generalisation by Brock-Minsky-Namazi-Souto \cite{BMNS} on model manifolds of Heegaard splittings and more complicated glueing.
They showed that for a given positive constant $R$, if we take a sufficiently large $K_0$, 
then
for any   $3$-manifold $M$ which admits a Heegaard splitting
$M=H_1 \cup_S H_2$ with $R$-bounded combinatorics and Hempel distance 
$\ge K_0$,
there is a bi-Lipschitz model manifold which is a pinched negatively curved $3$-manifold 
homeomorphic to $M$ with its pinching constants depending only 
on $K_0$ and the topological type of $S$,
and is obtained by pasting hyperbolic handlebodies as will be explained below.

The model manifold has a negatively curved metric obtained by  glueing hyperbolic metrics on $H_1$ and $H_2$ using a hyperbolic 3-manifold homeomorphic to $S \times \reals$.
We shall explain how to do this following the description by Namazi \cite{Na}.
We first choose handlebody pants decompositions $P_1\subset \Delta_1$ and 
$P_2\subset \Delta_2$ of $H_1$ and $H_2$ as in Definition \ref{def_bounded-combinatorics}, which realises the distance between $\Delta_1$ and $\Delta_2$.
By \cite[Lemma 2.9]{Na}, 
there are clean markings $\alpha_1, \alpha_2$ with base curves $P_1, P_2$ respectively such that $\pi_Y(\alpha_1)$ and $\pi_Y(\alpha_2)$ are within the distance $R$ in $\cc(Y)$ for any proper open incompressible subsurface $Y$ of $S$.
We take points $m_1, m_2$ in the Teichm\"uller space of $S$ such that 
the total length of $\alpha_j$ with respect to the hyperbolic metric compatible with $m_j$ 
is shortest among all clean markings.
Let $g_1$ and $g_2$ be convex cocompact hyperbolic metrics on $\Int H_1$ and $\Int H_2$  whose marked conformal structures at infinity are $m_2$ and $m_1$ respectively, where we define the markings of $H_1$ and $H_2$ by  regarding them as being embedded in $M$ with the common boundary $S$.
Then if we take $K_0$ to be sufficiently large, there are a doubly degenerate hyperbolic $3$-manifold $M_S$ homeomorphic to 
$S \times \reals$ 
and open sets 
$U_1 \subset \Int H_1, U_2 \subset \Int H_2$, 
$V_1, V_2 \subset M_S$ with the following conditions.

\begin{enumerate}[(a)]
\item The hyperbolic $3$-manifold $M_S$ has $\epsilon_0$-bounded geometry with $\epsilon_0$ depending only on $R$; that is, every closed geodesic in $M_S$ has length greater than or equal to $\epsilon_0$.
\item All of $U_1, U_2, V_1, V_2$ are homeomorphic to $S \times (0,1)$.
\item There is a bi-Lipschitz diffeomorphism between 
the subspace $U_j$ of the hyperbolic manifold $(\Int H_j, g_j)$ and 
the subspace $V_j$ of the hyperbolic manifold $M_S$
for $j=1,2$ which tends to an isometry uniformly in the $C^2$-topology as $K_0 \rightarrow \infty$.
\item The intersection $V_1 \cap V_2$ is also homeomorphic to $S \times (0,1)$.
\item We define the width
of $V_1 \cap V_2$ to be the distance between its two frontier components with respect to the metric of $M_S$.
Then, the width of $V_1 \cap V_2$ goes to $\infty$ as $K_0 \rightarrow \infty$.
\end{enumerate}

These hold because of the following facts.
Suppose that we are given a sequence of Heegaard splittings
as above with Hempel distance going to $\infty$.
We use the superscript $i$ to denote the $i$-th pants decompositions and metrics, 
as $P_j^i$ or $m_j^i$ or $g_j^i$ for $j=1,2$.
 If we fix a marking on $H_1$, then the convex cocompact hyperbolic structure $(\Int H_1, g_1^i)$ converges to a geometrically infinite hyperbolic structure $g_1^\infty$ in $\Int H_1$ both algebraically and geometrically, 
after passing to a subsequence.
The convergence is guaranteed by the facts that the total length of $P_2^i$,
with respect to the hyperbolic metric $m_2^i$ on $S$ corresponding to the conformal structure
at infinity associated with $g_1^i$,
is uniformly bounded 
and that $\{P_2^i\}$ converges to a lamination in the Masur domain of the projective lamination space passing to a subsequence.
The hyperbolic $3$-manifold $(\Int H_1, g_1^\infty)$ is asymptotically isometric to a doubly degenerate hyperbolic $3$-manifold $M_S$ whose ending laminations are 
the
limits of $\{P_1^i\}$ and $\{P_2^i\}$ respectively; 
that is, for any $\delta >0$, there exists a compact set $K$ such that $\Int H_1 \setminus K$ with the metric $g^\infty_1$  is embedded into a neighbourhood of 
the end of $M_S$, with ending lamination equal to the limit of $\{P_2^i\}$,
by a diffeomorphism which is $\delta$-close to an isometry in the $C^2$-topology.
Combined with the convergence which we have just explained, it follows that for any $\delta >0$, there exist 
a compact set $K$ 
and  a sequence $\{R_i\}$ going to $\infty$ such that for the $R_i$-neighbourhood $N_{R_i}(K)$ of $K$ 
in $\Int H_1$
with respect to the metric $g^i_1$, its subset $N_{R_i}(K) \setminus K$ is embedded into $M_S$ by a diffeomorphism $\delta$-close to an isometry. 
In the same way,  $(\Int H_2, g_2^\infty)$ is asymptotically isometric to $M_S$, and the complement of a compact set in $(\Int H_2, g_2^\infty)$ is embedded nearly isometrically to a neighbourhood of the other end of $M_S$.
These facts guarantee the existence of the  sets
$U_1 \subset \Int H_1, U_2 \subset \Int H_2$, 
$V_1, V_2 \subset M_S$ which appeared above.
(As for more details on the proof of this convergence and the property of its limit, refer to \cite{Oh2} and \cite{NS2}.)
We also note that the bi-Lipschitz diffeomorphisms in (c) are liftable as can be seen by checking the conditions in \cite[Lemma 3.1]{MiJ}.

Now the model manifold is constructed as follows.
We paste $(\Int H_1, g_1)$ and $(\Int H_2, g_2)$ to get a manifold homeomorphic to $M$, identifying $U_1$ with $V_1$ and $U_2$ with $V_2$ by bi-Lipschitz diffeomorphisms close to isometries defined above, letting $V_1 \cap V_2$ be the margin of pasting and throwing away neighbourhoods of ends outside $U_1$ and $U_2$.
Construct a Riemannian metric on the resulting manifold 
by glueing the hyperbolic metrics $g_1$ and $g_2$ 
along the margin of glueing by using a bump function.
Then, since both $g_1$ and $g_2$ get closer and closer 
to the hyperbolic metric of $M_S$ on a neighbourhood of the margin of glueing,
the sectional curvature of the resulting metric 
lies between $(-1-\varepsilon, -1+\varepsilon)$, with $\varepsilon \rightarrow 0$ as 
$K_0 \rightarrow \infty$.
Tian's theorem
\cite{Tian}
(cf.\ \cite[Chapter 12]{Na})
implies that this metric is $\rho$-close (as Riemannian metric) to the original hyperbolic metric of $M$ in the $C^2$-topology, 
where the constant $\rho$ depends only on $K_0$ and goes to $0$ as $K_0 \rightarrow \infty$.

In the following, we show that if we take the lower bound $K_0$ of the Hempel distance
to be large enough, 
then there is an open set $O$ in $\pl(S)$ satisfying the 
conditions in Theorem \ref{main1}.
As was explained above, the hyperbolic metric on $M$ is uniformly close to the constructed negatively curved model metric in the $C^2$-topology.
On the other hand, as was shown in the construction, the negatively curved metric on 
$M$ is close to the
hyperbolic metric on $M_S$ in the part corresponding to $V_1 \cap V_2$ which appeared above.
Recall that, for any positive constant $\epsilon$,
there is an upper bound  (depending only on $\epsilon$ and the topological type of $S$) for the diameters modulo their 
$\epsilon$-thin parts of the pleated surfaces
in $M_S$ intersecting $V_1 \cap V_2$ 
(see \cite[Chapter 9]{ThL} and \cite[Lemma 1.2]{Oh}).
Since $M_S$ has $\epsilon_0$-bounded geometry with $\epsilon_0$ depending only on $R$  in our case,
the injectivity radii on pleated surfaces are bounded from below by $\epsilon_0/2$.
Therefore, there is a constant $K$,
depending only on  $R$ and the topological type of $S$,
such that a pleated surface 
in $M_S$
which has a point in $V_1 \cap V_2$ at the distance at least $K$ from the frontier of $V_1 \cap V_2$ must be entirely contained in $V_1 \cap V_2$.

Now, recall that Thurston defined a notion of rational depth of measured laminations
(see \cite[Definition 9.5.10]{ThL}).
A measured lamination is said to have rational depth $k$ when it is carried by a train track with a weight system $w$ which has $k$ independent linear relations over $\rationals$ in addition to those coming from the switch conditions.
Thurston proved the following two facts:
(1) The set of measured laminations of rational depth $0$ has full measure in $\ml(S)$.
 (2) 
 For any
 two measured laminations, we can find 
an embedded arc 
in $\ml(S)$
 connecting them whose interior  passes only measured laminations of rational depth less than $2$, and only countably 
 many measured laminations of rational depth $1$.

One more thing proved by Thurston which we need to use now is  the existence of   one-parameter family consisting of pleated surfaces and negatively curved interpolated 
surfaces
between two pleated surfaces
(see \cite[Section 9.5]{ThL} and \cite[Section 4.E]{Oh}).
This shows that for any point in $M_S$, there is 
a negatively curved surface in the family passing through that point and that such a surface is contained in a uniformly bounded neighbourhood of a pleated surface realising a measured lamination of rational depth $0$.
Therefore, any point in $M_S$ has a pleated surface homotopic to the inclusion of $S$, which realises a measured lamination of rational depth $0$, 
which contains a point within a uniformly bounded distance from the give point.
In particular,
if the width of $V_1 \cap V_2$ is large enough, then there is a pleated surface $f$ realising a  measured lamination $\lambda$ of rational depth-$0$ whose image is contained in $V_1 \cap V_2$ and is away from the frontier of $V_1 \cap V_2$ by the distance greater than $L$ for any $L$ sufficiently smaller than the width of $V_1 \cap V_2$.
We shall specify this constant $L$ later.
For the moment, we just note that we can take $L$ to be large if $K_0$ is large.
Since we can take $V_1$ and $V_2$ so that the width of $V_1 \cap V_2$ goes to $\infty$ as the lower bound of Hempel distance $K_0 \rightarrow \infty$,
we may assume, by taking $K_0$ large enough,
that such a pleated surface $f$ realising a measured lamination $\lambda$ of rational depth-$0$ actually exists.

We shall show by contradiction that there is an open neighbourhood $O$ of $[\lambda]$ in $\pl(S)$ such that every lamination contained in $O$ is realisable by a pleated surface in $V_1 \cap V_2$ whose image is at the distance greater than $L$ from the frontier of $V_1 \cap V_2$.
Suppose that such an open set does not exist.
Since the realisability of a measured lamination is invariant under scalar multiplications, there is a sequence of measured laminations $\{\lambda_i\}$ converging to $\lambda$, which cannot be realised by pleated surfaces contained in $V_1 \cap V_2$
at the distance greater than $L$ from the frontier of $V_1\cap V_2$.
Since there are only two unmeasured laminations which are unrealisable in $M_S$ and the measured laminations having these laminations as supports constitute closed subsets of $\ml(S)$ not containing $\lambda$,  by taking a subsequence, we can assume that all the $\lambda_i$ can be realised by pleated surfaces in $M_S$.
Let $f_i: S \rightarrow M_S$ be a pleated surface realising $\lambda_i$.
Since $\{\lambda_i\}$ converges to $\lambda$, which does not represent an ending lamination, we see that there is a compact set which all the images of 
$f_i$ intersect.
Therefore, by passing to a subsequence,
$\{f_i\}$ converges uniformly to a pleated surface $f_\infty$ realising $\lambda$
(cf. \cite[Theorem 5.2.18]{CEG}).

Since $\lambda$ has rational depth $0$, its realisation is unique; hence $f_\infty$ coincides with the pleated surface $f$ which appeared above.
It follows that the image of $f_i$ is also contained in $V_1 \cap V_2$ and at the distance greater than $L$ from the frontier of $V_1\cap V_2$, for sufficiently large $i$.
This is a contradiction.
Thus we have shown that there is an open set $O$ containing 
$[\lambda]$
and consisting
of measured laminations which can be realised by pleated surfaces in $V_1 \cap V_2$ whose images are at the distance greater than $L$ from the frontier of $V_1 \cap V_2$.

Next we shall show that no curve contained in $O$ can be null-homotopic in $M$ if we take 
the constant $K_0$  (and hence also $L$) to be large enough
and make $O$ smaller accordingly.
Suppose that a simple closed curve  
$c$ on $S$ is contained in $O$.
Since $c$ is realised 
by a pleated surface contained 
in $V_1 \cap V_2$, there is a closed geodesic $c^*$ in $V_1 \cap V_2$ homotopic to $c$.
Recall that there is a $C$-bi-Lipschitz diffeomorphism from an open set $V'$ in $M$ to $V_1 \cap V_2$, which lifts to a $C$-bi-Lipschitz diffeomorphism from 
the universal cover
$\tilde V'$ of $V'$ to $\hyperbolic^3$ (with respect to the path metrics), with a constant $C$ depending only on $S$ and $K_0$ and going to $1$ as $K_0 \rightarrow \infty$.
Regard $c^*$ as a closed geodesic arc, and let $\tilde c$ be a geodesic arc
in the universal cover $\widetilde{V_1 \cap V_2} \subset \tilde M_S=\hyperbolic^3$
obtained as a lift of the geodesic arc $c^*$.
Then $\tilde c$ is pulled back to a 
$(C^2,0)-$quasi-geodesic
$\tilde c'$ in the universal cover $\tilde V'$ of $V'$ (see \cite[Lemma 2.36]{OhD}).
By the stability of quasi-geodesics (see  \cite[Chaptire 3, Th\'eor\`eme 1.2] {CDP} or \cite[Theorem 2.31]{OhD}), there is a constant $N$ depending only on $C$, which goes to $0$ as $C \rightarrow 1$ such that $\tilde c'$
can be homotoped to a geodesic arc fixing the endpoints within the $N$-neighbourhood of
$\tilde c'$ if $\tilde c'$
is at the distance greater than $N$ from the ends of $\tilde V'$.

To be more precise, we cannot directly use the stability which is valid only in a Gromov hyperbolic space, 
whereas 
our space $\tilde V'$
is only a locally Gromov hyperbolic space, $\tilde V'$ is provided with a path metric induced
from the Riemannian metric of  $\tilde V'$.
We take the metric completion of $\tilde V'$ and get a geodesic space $\overline V'$.
The triangles in our space $\overline V'$ 
are known to be thin only when they do not touch the boundary.
Still, we can apply the same argument as the proof of the stability for $\overline V'$,  and can show that the geodesic connecting the endpoint of $\tilde c'$ 
cannot touch the boundary since $\tilde c'$ is at the distance more than $C^{-1}L$ from the boundary, and also that the stability holds.

Thus if $\tilde c'$
is at the distance greater than $N$ from the ends of $\tilde V'$,
then $\tilde c'$
can be homotoped to a geodesic arc fixing the endpoints within the $N$-neighbourhood of
$\tilde c'$, and projecting this to $V'$, 
we obtain a closed geodesic 
arc (not necessarily a closed geodesic) homotopic to
the pull-back of $c^*$ in $M$.
Therefore, if we take $K_0$ and  $L$ large enough so that the frontier of $V_1 \cap V_2$ is at the distance greater than $CN$, we see that the pull back of $c^*$ can be homotoped to a closed geodesic 
arc hence cannot be null-homotopic.

Thus we have shown that no 
simple closed curve
in $O$ is null-homotopic in $M$ if $K_0$ is large enough, and completed the proof of the first part of 
Theorem \ref{main1}.

It remains to show that two 
simple closed curves
in $O$ which are not homotopic on $S$ are not homotopic in $M$.
Suppose that two simple 
closed curves
$c_1$ and $c_2$ in $S$ 
represent distinct points in $O$.
As in the previous paragraph, we have quasi-geodesics $c_1', c_2'$ homotopic to $c_1, c_2$ in $M$, which are obtained by pulling back the closed geodesics $c_1^*$ and $c_2^*$  in $V_1 \cap V_2$ representing the free homotopy classes of $c_1$ and $c_2$.
We consider their infinite lifts $\hat c_1', \hat c_2'$  in $\tilde V'$, which are $(C^2,0)$-quasi-geodesic lines.
By applying the same argument as above using 
the stability of quasi-geodesic lines instead of that of quasi-geodesic arcs, we see that $\hat c_1', \hat c_2'$ can be homotoped to geodesic lines $\hat c_1^*, \hat c_2^*$ in $\tilde V'$ by a proper homotopy which doe not touch the boundary of $\tilde V'$.
By projecting them down to $M$, we get closed geodesics ${c_1'}^*, {c_2'}^*$, which must coincide since we assumed that $c_1$ and $c_2$ are homotopic in $M$.
It follows  that $c_1'$ and $c_2'$ are homotopic in $V'$, which implies that 
 $c_1^*$ and $c_2^*$ are homotopic in 
$V_1\cap V_2\cong S\times (0,1)$.
This contradicts our assumption that $c_1$ and $c_2$ are not homotopic in $S$.
This shows that $c_1$ and $c_2$ cannot be homotopic in $M$, and we have completed the proof of Theorem \ref{main1}.
\vskip 0.5cm

Next we shall show that $\langle G_1, G_2 \rangle$ has a non-empty domain of discontinuity on $\pl(S)$.

\begin{theorem}
\label{discontinuous open set}
Let $O$ be an open set in $\pl(S)$ as in Theorem \ref{main1}.
Then we have $\{g\in \langle G_1, G_2 \rangle \mid g O \cap O \neq \emptyset\}=\{1\}$.
\end{theorem}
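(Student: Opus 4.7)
The plan is to suppose, for contradiction, that $g \in \langle G_1, G_2 \rangle$ is a non-identity element with $gO \cap O \neq \emptyset$, and to show that in fact $g$ must be trivial in $\aut(\cc(S))$, yielding a contradiction.

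First, set $O' := O \cap g^{-1}(O)$, a non-empty open subset of $\pl(S)$. Since projective classes of weighted essential simple closed curves are dense in $\pl(S)$, the collection $D$ of such classes lying in $O'$ is dense in $O'$. For each simple closed curve $c$ on $S$ with $[c] \in D$, both $[c]$ and $g[c]$ lie in $O$. By definition, each $G_i$ is represented by auto-homeomorphisms of $H_i$ homotopic to the identity, so every element of $\langle G_1, G_2 \rangle$ preserves the homotopy class in $M$ of any simple loop on $S$; hence $c$ and $gc$ are homotopic in $M$. Theorem \ref{main1} then forces $[c] = g[c]$ in $\pl(S)$, because distinct points of $O$ cannot represent simple closed curves homotopic in $M$. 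Using the continuity of the action of $g$ on $\pl(S)$ together with the density of $D$ in $O'$, we conclude that $g$ fixes all of $O'$ pointwise.

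The main obstacle is the remaining step: deducing $g = 1$ in $\aut(\cc(S))$ from the pointwise fixing of an open subset of $\pl(S)$. Under the Ivanov--Korkmaz--Luo identification $\aut(\cc(S)) \cong \mcg(S)/Z$, where $Z$ denotes the centre of $\mcg(S)$, this reduces to the statement that every non-central mapping class has nowhere dense fixed set on $\pl(S)$: by the Nielsen--Thurston classification, a pseudo-Anosov class fixes exactly two points of $\pl(S)$; an infinite-order reducible class fixes a proper subset determined by its canonical reduction system and the stable/unstable laminations of its pseudo-Anosov components; and a non-central finite-order class acts as a non-trivial PL homeomorphism of $\pl(S)$ whose fixed set is a proper PL subcomplex. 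A more concrete alternative would be to select finitely many simple closed curves in $O'$ that jointly fill $S$ and whose common stabiliser in $\mcg(S)$ reduces to $Z$, using the density of $D$ in $O'$ and the fact that any finite non-central subgroup of $\mcg(S)$ moves some simple closed curve in any prescribed open subset of $\pl(S)$. Either approach forces $g \in Z$, hence $g = 1$ in $\aut(\cc(S))$, contradicting our assumption $g \neq 1$.
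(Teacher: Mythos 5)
Your proposal is correct and follows essentially the same route as the paper: use Theorem \ref{main1} to show that $g$ fixes the dense set of simple closed curves in $gO\cap O$, hence fixes that open set pointwise, and then invoke the Nielsen--Thurston classification to see that no non-trivial element of $\aut(\cc(S))$ can fix a non-empty open subset of $\pl(S)$. The only (minor) divergence is that you treat the finite-order case explicitly, whereas the paper simply asserts that $g$ is not torsion (implicitly relying on the free product structure of $\langle G_1,G_2\rangle$ and the fact that elements of $G_i$ have infinite order) before running the same pseudo-Anosov/reducible case analysis.
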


\begin{proof}
Suppose that 
$U:=gO \cap O$
is non-empty.
Let $c$ be any simple closed curve 
in $U$.
Then, there exists a simple closed curve $c'$ in $O$
with $g(c')=c$.
Since $\langle G_1, G_2 \rangle$ acts on $\pi_1(M)$ trivially, we see that $c$ is freely homotopic to $c'$ in $M$.
Since 
$c$ and $c'$ are
contained in $O$, by Theorem \ref{main1}, it follows that $c=c'$.
This shows that $g$ fixes every simple closed curve contained in $U$.
Since the simple closed curves are dense in $U$, this implies that $g$ fixes $U$ pointwise.
It is easy to see as follows that such an element in $\mcg(S)$ must be the identity.
Since $g$ is not a torsion, it is either pseudo-Anosov or reducible.
If $g$ is pseudo-Anosov, it fixes only two points in 
$\pl(S)$.
If $g$ is reducible, some of its powers, $g^p$ is 
either partially pseudo-Anosov, \ie, there is a subsurface $T$ of $S$ such that $g^p|T$ is pseudo-Anosov, or a product of nontrivial powers of Dehn twists along mutually disjoint 
essential simple closed curves $c_1,\cdots, c_m$ on $S$.
In the first case, 
let $\lambda_u, \lambda_s$ be the unstable and stable laminations of $g^p|T$.
Then $g^p$ fixes only a measured lamination whose supports contain either $|\lambda_u|$ or $|\lambda_s|$.
Such laminations cannot constitute an open set.
Therefore $g^p$ cannot fix all points in $U$.
In the latter case, the fixed point set of the action of $g^p$ on
the projective measured lamination space consists of those elements
whose underlying geodesic laminations do not intersect $\cup c_i$ transversely.
Such a subspace of 
$\pl(S)$ 
cannot contain 
a nonempty open set.
Therefore $g^p$ cannot fix all points in $U$ in this case, too.
\end{proof}

\section{Iteration of pseudo-Anosov map}

In the case when two handlebodies are pasted by an $n$-time iteration of a pseudo-Anosov map, we can have a region as in Theorems \ref{main1} and \ref{discontinuous open set}, which gets larger and larger as $n \rightarrow \infty$ to cover the complement of  the closure of meridians and 
isolated points as we shall see below.

Let $H_1$ and $H_2$ be handlebodies whose boundary is identified with a closed
orientable surface $S$ of genus $>1$.
Let $\phi: S \rightarrow S$ be a pseudo-Anosov map with a stable lamination $\mu_\phi$ and an unstable lamination $\lambda_\phi$.
For each positive integer $n$, we consider the $3$-manifold 
$M_n=H_1 \cup_{\phi^n} H_2$
obtained by identifying the boundaries of $H_1$ and $H_2$ via $\phi^n:\partial H_1=S\to S=\partial H_2$.
As in the introduction,
let $G_1$ be the subgroup of $\aut(\cc(S))$ obtained as the image 
of the subgroup of $\mcg(S)$ consisting of those classes of 
auto-homeomorphisms of $H_1$  which are homotopic to the identity in $H_1$.
The other handlebody $H_2$ is regarded as embedded in $M_n$ as 
$H_2^{(n)}$ and $\partial H_2^{(n)} =\partial H_1=S$.
We let $G^{(n)}_2$ be the subgroup of 
the subgroup of $\aut(\cc(S))$ obtained as the image 
of the subgroup of $\mcg(S)$ consisting of those classes of 
auto-homeomorphisms of $H_2^{(n)}$  which are homotopic to the identity in $H_2^{(n)}$.
We denote by $G^{(n)}$  the subgroup of 
$\aut(\cc(S))$
generated by $G_1$ and $G_2^{(n)}$.

In the measured lamination space $\ml(\partial H_1)$, we define the set of doubly incompressible laminations $\mathcal D(H_1)$ to be
$$\mathcal D(H_1)=\{\lambda \in \ml(\partial H_1) \mid \exists \eta>0 \text{ such that } i(\lambda, m) >\eta \text{ for every meridian } m\},$$
following Lecuire \cite{Lec} and Kim-Lecuire-Ohshika \cite{KLO}.
We denote the projection of $\mathcal D(H_1)$ to $\pl(\partial H_1)$ by 
$\mathcal{PD}(H_1)$.
As was shown in Theorem 1.4 in Lecuire \cite{LecS}, the group $G_1$ acts on $\mathcal D(H_1)$ properly discontinuously.
A smaller open set called the Masur domain is defined as follows.
First we define 
$\mathcal C(H_1)$
by
\begin{equation*}
\begin{split}
\mathcal C(H_1)=\{c \in \ml(\partial H_1) \mid &\text{ the support of } c \text{ is a simple closed curve}\\ &\text{ bounding a disc in } H_1\}.
\end{split}
\end{equation*}
We then define the Masur domain to be
$$\mathcal M(H_1) = \{ \lambda \in \ml(S) \mid i(\lambda, \mu) >0 \text{ for any } \mu \in \overline{C(H_1)}\}.$$
We note that $\mathcal D(H_1)$ contains $\mathcal M(H_1)$ and that any arational lamination 
in $\mathcal D(H_1)$ is also contained in $\mathcal M(H_1)$
(see \cite[Lemmas 3.1 and 3.4]{Lec}).
Since $\mathcal{D}(H_1)$ contains the projectivised Masur domain of $H_1$
which was proved to have full measure in $\pl(S)$ by Kerckhoff \cite{Ker}, $\mathcal{PD}(H_1)$ also has full measure.

\begin{theorem}
\label{iteration}
Assume that $\lambda_\phi$ is contained in $\mathcal M(H_1)$ and 
$\mu_\phi$ is contained in $\mathcal M(H_2)$.
Then for any projective lamination $[\lambda]$ in 
$\mathcal{PD}(H_1) \setminus G_1[\lambda_\phi]$,
there are an open neighbourhood $U$ and $n_0 \in \naturals$ 
such that for any $n \geq n_0$, 
no simple closed curve whose projective class is contained in $U$ is null-homotopic, 
and $\{g \in G^{(n)} \mid g U \cap U \neq \emptyset\}$ is finite.
\end{theorem}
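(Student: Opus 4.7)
The plan is to apply Theorems \ref{main1} and \ref{discontinuous open set} to the Heegaard splittings $M_n = H_1 \cup_{\phi^n} H_2$ for large $n$, and to show that the open set of discontinuity produced there can be arranged to cover any prescribed projective class $[\lambda] \in \mathcal{PD}(H_1) \setminus G_1[\lambda_\phi]$. First, I would verify that the splittings $M_n$ satisfy the hypotheses of Theorem \ref{main1} for $n$ large. The Hempel distance $d_{\cc(S)}(\Delta_1, \phi^{-n}(\Delta_2))$ tends to infinity because $\phi$ acts on $\cc(S)$ as a loxodromic isometry whose attracting/repelling fixed points $[\lambda_\phi], [\mu_\phi]$ lie outside the closures $\overline{\Delta_i}$ (a consequence of the Masur-domain hypotheses $\lambda_\phi \in \mathcal{M}(H_1), \mu_\phi \in \mathcal{M}(H_2)$ together with Klarreich's identification of the Gromov boundary of $\cc(S)$ with ending laminations). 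The $R$-bounded combinatorics, for some $R$ independent of $n$, follows from the bounded geodesic image theorem of Masur--Minsky applied to the pseudo-Anosov axis: no fixed proper subsurface can absorb arbitrarily large projections of $\phi^{-n}(\Delta_2)$ relative to $\Delta_1$.

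For each sufficiently large $n$, Theorem \ref{main1} furnishes a doubly-degenerate model $M_S^{(n)}$ and a central region $V_1^{(n)}\cap V_2^{(n)}$ whose width tends to infinity with $n$. The ending laminations of $M_S^{(n)}$ project in $\pl(S)$ to $[\mu_\phi]$ on one side and to a class in the $G_1$-orbit of $[\lambda_\phi]$ on the other, the $G_1$-ambiguity arising because the choice of pants decomposition $P_1^{(n)}\subset\Delta_1$ realizing $d(\Delta_1,\phi^{-n}(\Delta_2))$ is pinned down only up to the $G_1$-action. Since $[\lambda]\notin G_1[\lambda_\phi]$ and $[\lambda]\neq[\mu_\phi]$ (using $[\lambda]\in\mathcal{PD}(H_1)$), the lamination $[\lambda]$ is not an ending lamination of $M_S^{(n)}$, so it is realized by some pleated surface $f_n\colon S\to M_S^{(n)}$. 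The uniform $\epsilon_0$-bounded geometry of $M_S^{(n)}$ then forces the image of $f_n$ to have uniformly bounded diameter. Combined with the divergence of the width, this image sits deep inside $V_1^{(n)}\cap V_2^{(n)}$, at distance $\to\infty$ from the frontier of the central region as $n\to\infty$.

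Following the argument of Theorem \ref{main1}, I would approximate $[\lambda]$ by rational-depth-zero laminations and invoke the standard compactness/continuity of pleated-surface realizations to extract an open neighborhood $U$ of $[\lambda]$ in $\pl(S)$ and an integer $n_0$ such that, for every $n\geq n_0$, each $[\lambda']\in U$ is realized by a pleated surface inside $V_1^{(n)}\cap V_2^{(n)}$ at distance greater than any prescribed constant $L$ from its frontier. The stability-of-quasi-geodesics argument of Theorem \ref{main1} then gives that no simple closed curve representing a point of $U$ is null-homotopic in $M_n$, and in fact that distinct projective classes in $U$ have distinct free homotopy classes in $M_n$. Adapting the argument of Theorem \ref{discontinuous open set}, any $g\in G^{(n)}$ with $gU\cap U\neq\emptyset$ must fix every simple closed curve in $U\cap g^{-1}U$; since $\langle G_1,G_2^{(n)}\rangle$ acts properly discontinuously on $\mathcal{PD}(H_1)$ by Lecuire's theorem, the set of such $g$ is finite.

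The principal obstacle will be Step~2: identifying exactly how the ending laminations of $M_S^{(n)}$ depend on the chosen pants decompositions in the iterated gluing and hence justifying that the problematic locus is the full orbit $G_1[\lambda_\phi]$ rather than $\{[\lambda_\phi]\}$ alone. Equally, it will take care to verify that the deep-in-the-middle realization of $[\lambda]$ persists over a whole open neighborhood uniformly in $n$, using the bi-Lipschitz model together with compactness of the family of pleated surfaces realizing laminations in a compact subset of $\ml(S)\setminus(G_1\{\lambda_\phi\}\cup\{\mu_\phi\})$. Once these points are secured, the remaining arguments are the closed-geodesic comparison and quasi-geodesic stability already contained in the proofs of Theorems \ref{main1} and \ref{discontinuous open set}.
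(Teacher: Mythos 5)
There is a genuine gap, and it is precisely the one you flag as ``the principal obstacle'': your argument runs entirely through the doubly degenerate middle model $M_S^{(n)}$ and its central region $V_1^{(n)}\cap V_2^{(n)}$, and that framework cannot produce the correct excluded set $G_1[\lambda_\phi]$, nor does it cover all of $\mathcal{PD}(H_1)$. In $M_S^{(n)}$ only the two ending laminations are unrealisable, so your scheme would appear to exclude only $\{[\lambda_\phi],[\mu_\phi]\}$; the attribution of the $G_1$-ambiguity to the choice of pants decomposition is not the right mechanism and is left unresolved. Moreover, the step ``bounded diameter of the pleated surface $+$ divergent width $\Rightarrow$ the image sits deep inside $V_1^{(n)}\cap V_2^{(n)}$'' is false for general $[\lambda]\in\mathcal{PD}(H_1)$: a bounded-diameter pleated surface can sit anywhere in $M_S^{(n)}$, and for $[\lambda]$ close to the limit set of meridians of $H_1$ (such $[\lambda]$ are still doubly incompressible) the geodesic representative in $M_n$ lies deep inside the handlebody side, not in the gluing region, so its realisation cannot be captured by the middle model. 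This is why the theorem is stated for $\mathcal{PD}(H_1)$ relative to $H_1$ rather than for a small open set in the middle as in Theorem \ref{main1}.

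The paper's proof takes a different object as its starting point: the geometric (equal to algebraic) limit $M_\infty$ of the manifolds $M_n$ based at a point of $H_1$, which by Namazi--Souto \cite{NS} is a singly degenerate hyperbolic structure on $\Int H_1$ whose unique end has ending lamination $\lambda_\phi$. Lecuire's realisation theorem (Theorem 5.1 of \cite{Lec}) for doubly incompressible laminations on handlebody boundaries then says exactly that every lamination in $\mathcal{PD}(H_1)\setminus G_1[\lambda_\phi]$ is realised by a pleated surface in $M_\infty$ --- this is where the orbit $G_1[\lambda_\phi]$ enters, since laminations in the same $G_1$-orbit are homotopic in $H_1$. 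The non-triviality of curves in a neighbourhood $U$ with $\bar U\subset\mathcal{PD}(H_1)\setminus G_1[\lambda_\phi]$ is then obtained by a compactness argument: a sequence of null-homotopic curves in $U$ would converge to a lamination realisable in $M_\infty$, and pulling back a train-track neighbourhood of its realisation by approximate isometries (as in \S 6 of \cite{Oh}) shows nearby laminations are realisable in $M_n$ for large $n$, a contradiction. Only the two remaining assertions (distinct curves in $U$ non-homotopic in $M_n$, and finiteness of $\{g\in G^{(n)}\mid gU\cap U\neq\emptyset\}$) are obtained by repeating the arguments of Theorems \ref{main1} and \ref{discontinuous open set}. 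Separately, your appeal to proper discontinuity of the full group $\langle G_1,G_2^{(n)}\rangle$ on $\mathcal{PD}(H_1)$ misstates Lecuire's result, which concerns the action of $G_1$ alone on $\mathcal{D}(H_1)$. To repair your proof you would need to replace the middle-model realisation step by realisation in the limiting handlebody $M_\infty$ and invoke Lecuire's theorem there.
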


\begin{proof}
Let $\iota_n : H_1 \rightarrow M_n$ be the inclusion.
Let $\phi_n : \pi_1(M_n) \rightarrow \psl$ be a representation corresponding to the hyperbolic structure on $M_n$.
In \S 5 of Namazi-Souto \cite{NS} it was proved that  under our assumptions, 
$\{\phi_n \circ \iota_n\}$ converges up to conjugations,
where we continue to denote by $\iota_n$ the homomorphism
$\pi_1(H_1)\to \pi_1(M_n)$ induced by $\iota_n$.
Note that we see by \cite[Lemma 3.4]{Lec}
that the homeomorphism $\phi$ is generic in the sense of 
\cite[Definition 2.1]{NS}
i.e., the unstable lamination $\lambda_{\phi}$ is not a limit, in $\pl(\partial H_1)$,
of meridians of $H_1$, and the stable lamination $\mu_{\phi}$ is not a limit of meridians of $H_2$.

Fix some nontrivial
element $\gamma \in \pi_1(H_1)$.
For any sufficiently large $n$, the element $\iota_n(\gamma)$ represents a non-trivial element of $\pi_1(M_n)$ 
(see \cite[Lemma 6.1]{NS}).
Let $\gamma^*_n$ be a closed geodesic in $M_n$ representing $\gamma$ in $\pi_1(M_n)$.
Since $\phi_n \circ \iota_n$ converges up to conjugations, the length of $\gamma^*_n$ is bounded as $n \rightarrow \infty$ and $\gamma^*_n$ converges geometrically to the projection in the geometric limit  of some closed geodesic in the algebraic limit.
Hence the distance between $\gamma^*_n$ and a spine of $H_1$ is bounded.
Therefore, if we take a base point $x_n$ on $\gamma_n^*$, then the geometric limit of $(M_n, x_n)$ is a hyperbolic $3$-manifold whose fundamental group is identified with $\pi_1(H_1)$.
We denote this geometric limit by $M_\infty$.
Namazi-Souto showed that this $M_\infty$ coincides with the hyperbolic $3$-manifold corresponding to the limit $\phi_\infty$ of $\{\phi_n \circ \iota_n\}$
(see \cite[Theorem 5.2]{NS}).
This hyperbolic $3$-manifold has a unique end, which has ending lamination corresponding to the limit of meridians of $H_2$ regarded as a simple closed curve on $\partial H_1$ by pasting map $\phi^{-n}$ as $n \rightarrow \infty$, which coincides with $\lambda_\phi$.
(The limit does not depend on the choice of meridian.) 
By the uniqueness of ending lamination and Theorem 5.1 in Lecuire \cite{Lec}, every lamination contained in 
$\mathcal{PD}(H_1) \setminus G_1[\lambda_\phi]$
is realisable by a pleated surface homotopic to the inclusion of $\partial H_1$.

Now, let $[\lambda]$ be a projective lamination  contained in 
$\mathcal{PD}(H_1) \setminus G_1[\lambda_\phi]$
Since $G_1$ acts on $\mathcal{PD}(H_1)$ properly discontinuously,
$\mathcal{PD}(H_1) \setminus G_1[\lambda_\phi]$
is an open set,
and hence
there is a neighbourhood $U$ of $[\lambda]$ which is contained in 
$\mathcal{PD}(H_1) \setminus G_1[\lambda_\phi]$.
We take $U$ so that its closure $\bar U$ is compact and is also contained in 
$\mathcal{PD}(H_1) \setminus G_1[\lambda_\phi]$.
Then every simple closed curve in $\bar U$ is realised by a pleated surface in $M_\infty$.
We shall show that there is $n_0 \in \naturals$ such that for every $n \geq n_0$ and every simple closed curve in $U$ represents a non-trivial class in $\pi_1(M_n)$.

Suppose not.
Then there is a sequence of simple closed curves with $[\gamma_n]$ contained in $U$ such that $\gamma_n$ is null-homotopic in $M_n$.
Taking a subsequence, $[\gamma_n]$ converges to a projective lamination $[\mu]$ passing to a subsequence.
The projective lamination $[\mu]$ is contained in $\bar U$; hence by our choice of $U$, it is contained in 
$\mathcal{PD}(H_1) \setminus G_1[\lambda_\phi]$.
Now, by the same argument as  \S 6 in Ohshika \cite{Oh}, this implies that there exists a neighbourhood $V$ of $[\mu]$ and $N \in \naturals$ such that every lamination in $V$ is realisable in $M_n$ for $n \geq N$.
Indeed, since $\mu$ is realisable in the geometric limit $M_\infty$, there is a train track $\tau$ with small curvature carrying $\mu$ such that every measured lamination carried by $\tau$ is realised in a fixed neighbourhood of the realisation of $\mu$.
By pulling back this to $M_n$ by an approximate isometry, we see that we have a neighbourhood as above.

Now, the existence of such $V$ contradicts the assumption that $[\gamma_n]$  is null-homotopic in $M_n$.
Thus we have shown that there is a neighbourhood $U$ of $\lambda$ such that for every $n \geq n_0$ and every simple closed curve in $U$ represents a non-trivial class in $\pi_1(M_n)$.
Repeating the argument in the proof of Theorem~\ref{main1}, we can also show that by letting $U$ smaller and $n_0$ larger if necessary, no two distinct simple closed curves in $U$ are homotopic in $M_n$ for $n \geq n_0$.

Moreover, by the same argument as in the proof of Theorem \ref{discontinuous open set}, we can show that by letting $U$ be smaller and $n_0$ larger, we have the condition  $\{g \in G^{(n)} \mid gU \cap U \neq \emptyset\}$ is finite for every $n \geq n_0$.
\end{proof}


\begin{thebibliography}{99}
\bibitem{BP} R.\ Benedetti and C.\ Petronio, {\em Lectures on hyperbolic geometry},
Universitext. Springer-Verlag, Berlin, 1992. xiv+330 pp.
\bibitem{Bo} B.\ Bowditch, 
{\em Tight geodesics in the curve complex}, 
Invent. Math. {\bf 171}(2008), 281-300.

\bibitem{BMNS} J.\ Brock, Y.\ Minsky, H.\ Namazi and J.\ Souto,
 {\em Bounded combinatorics and uniform models for hyperbolic $3$-manifolds},
draft.

\bibitem{BH}
M.\ Bridson and A.\ Haefliger, 
{\em Metric spaces of non-positive curvature},
Grundlehren der Mathematischen Wissenschaften 
{\bf 319},  Springer-Verlag, Berlin, 1999. xxii+643 pp.

\bibitem{CEG} 
R.D.\ Canary,D.B.A.\ Epstein, and P.\ Green, 
{\em Notes on notes of Thurston},
Analytical and geometric aspects of hyperbolic space (Coventry/Durham, 1984), pp.3--92, 
London Math. Soc. Lecture Note Ser., {\bf 111}, Cambridge Univ. Press, Cambridge, 1987. 

\bibitem{CDP} M.\ Coornaert,  T.\ Delzant, and A.\ Papadopoulos, {\em G\'{e}om\'{e}trie et th\'{e}orie des groupes. Les groupes hyperboliques de Gromov},
Lecture Notes in Mathematics, {\bf 1441}  Springer-Verlag, Berlin, 1990. x+165 pp.

\bibitem{Gordon}
C.\ Gordon,
{\em Problems},
Workshop on Heegaard Splittings, 401--411, Geom.
Topol. Monogr. {\bf 12}, Geom. Topol. Publ., Coventry, 2007.

\bibitem{Gro}
M.\ Gromov, {\em Metric structures for Riemannian and non-Riemannian spaces.}  With appendices by M. Katz, P. Pansu and S. Semmes. Translated from the French by Sean Michael Bates. Progress in Mathematics, 152. Birkh\"{a}user Boston, Inc., Boston, MA, 1999. xx+585 pp.

\bibitem{Ivanov}
N.V.\ Ivanov, 
{\em Mapping class groups},
Handbook of geometric topology,  523--633, North-Holland, Amsterdam, 2002.

\bibitem{Johnson}
J.\ Johnson,
{\em Mapping class groups of medium distance Heegaard splittings},
Proc. Amer. Math. Soc. {\bf 138} (2010), 4529--4535.

\bibitem{Ker}
S.\ Kerckhoff, {\em The measure of the limit set of the handlebody group}, Topology {\bf 29} (1990),  27--40.

\bibitem{KLO}
I.\ Kim, C.\ Lecuire and K.\ Ohshika, 
{\em Convergence of freely decomposable Kleinian groups}, preprint.

\bibitem{Kor}
M.\ Korkmaz, {\em Automorphisms of complexes of curves on punctured spheres and on punctured tori}, Topology Appl., {\bf 95} (2) (1999) 85--111.

\bibitem{Lec}
C.\ Lecuire,
{\em An extension of the Masur domain}, 
Spaces of Kleinian groups, 49--73, London Math. Soc. Lecture Note Ser. {\bf 329}, Cambridge Univ. Press, Cambridge, 2006.

\bibitem{LecS}
C.\ Lecuire,
{\em Bending map and strong convergence}, preprint, available at http://www.math.univ-toulouse.fr/~lecuire/properness.pdf

\bibitem{lee_sakuma_0}
D.\ Lee and M.\ Sakuma,
{\em Simple loops on $2$-bridge spheres in $2$-bridge link complements},
Electron. Res. Announc. Math. Sci. {\bf 18} (2011), 97--111.

\bibitem{lee_sakuma_1}
D.\ Lee and M.\ Sakuma,
{\em Epimorphisms between $2$-bridge link groups:
Homotopically trivial simple loops on $2$-bridge spheres},
Proc. London Math. Soc. {\bf 104} (2012), 359--386.

\bibitem{lee_sakuma_2}
D.\ Lee and M.\ Sakuma,
{\em Homotopically equivalent simple loops
on $2$-bridge spheres in $2$-bridge link complements (I), (II) and (III)}, 
to appear in Geometriae Dedicata.

\bibitem{lee_sakuma_3}
D.\ Lee and M.\ Sakuma,
{\em A variation of McShane's identity for $2$-bridge links},
Geometry \& Topology {\bf 17} (2013), 2061--2101.

\bibitem{Luo}
F.\ Luo, {\em Automorphisms of the complex of curves}, Topology, {\bf 39} (2) (2000) 283--298.

\bibitem{MaMi0}
H.\ Masur and Y.\ Minsky,
{\em Geometry of the curve complex I: Hyperbolicity},
Invent. Math. {\bf 138} (1999), 103--149.

\bibitem{MaMi2}
H.\ Masur and Y.\ Minsky,
{\em Geometry of the complex of curves. II. Hierarchical structure},
Geom. Funct. Anal. {\bf 10} (2000),  902--974. 

\bibitem{MaMi}
H.\ Masur and Y.\ Minsky,
{\em Quasiconvexity in the curve complex},
In the tradition of Ahlfors and Bers, III, 309--320, 
Contemp. Math. {\bf 355}, Amer. Math. Soc., Providence, RI, 2004.

\bibitem{MiJ} Y.\ Minsky,   {\em On rigidity, limit sets, and end invariants of hyperbolic $3$-manifolds}, J. Amer. Math. Soc. {\bf 7} (1994), 539--588. 

\bibitem{Na}
H.\ Namazi,
{\em Heeggard splittings and hyperbolic geometry}, Thesis, Stony Brook University, 2005

\bibitem{NaT}
H.\ Namazi,
{\em Big Heegaard distance implies finite mapping class group},
Topology and its Appl.
{\bf 154} (2007), 2939--2949

\bibitem{Na1}
H.\ Namazi,
{\em Heegaard splittings with bounded combinatorics}, in preparation


\bibitem{NS}
H.\ Namazi and J.\ Souto, 
{\em Heegaard splittings and pseudo-Anosov maps}, 
Geom. Funct. Anal. {\bf 19} (2009), no. 4, 1195--1228.
\bibitem{NS2}
H.\ Namazi and J.\ Souto, Non-realizability and ending laminations: proof of the density conjecture. Acta Math. {\bf 209} (2012), no. 2, 323--395.

\bibitem{Oh}
K.\ Ohshika,
{\em Kleinian groups which are limits of geometrically finite groups},
Mem. Amer. Math. Soc. 177 (2005), no. 834

\bibitem{OhD}
K.\ Ohshika,
{\em Discrete groups}, Translated from the 1998 Japanese original by the author. Translations of Mathematical Monographs {\bf 207}, 
Iwanami Series in Modern Mathematics. American Mathematical Society, Providence, RI, 2002. x+193 pp.
\bibitem{Oh2}
K.\ Ohshika, Realising end invariants by limits of minimally parabolic, geometrically finite groups,  Geom. and Topol. {\bf 15} (2011) 827--890.


\bibitem{Ohtsuki-Riley-Sakuma}
T.\ Ohtsuki, R.\ Riley, and M.\ Sakuma,
{\em Epimorphisms between 2-bridge link groups},
Geom. Topol. Monogr. {\bf 14} (2008), 417--450.


\bibitem{Ot}
J-P.\ Otal,
{\em Courants g\'eod\'esiques et produits libres}, Th\`ese d'Etat, Universit\'e de Paris-Sud, Orsay (1988).


\bibitem{Sakuma}
M.\ Sakuma,
{\em Problem session},
In ``Geometric and analytic approaches to representations of a group and representation spaces'',
R.I.M.S. Kokyuroku {\bf 1777} (2012), 110--112.

\bibitem{ThL}
W.\ Thurston,
{\em The geometry and topology of three-manifolds}, 
lecture notes, available at http://library.msri.org/books/gt3m/

\bibitem{Tian}
G.\ Tian,
{\em A pinching theorem on manifolds with negative curvature},
preprint.



\end{thebibliography}
\end{document}